\def\R{\mathbb R}
\def\N{\mathbb N}
\def\G{\mathbb G}
\def\H{\mathbb H}
\def\X{\mathrm X}
\def\Z{\mathbb Z}
\def\T{\mathrm T}
\def\S{\mathcal S}
\def\SS{\mathbb S}
\def\M{\mathcal M}
\def\W{\mathcal W}
\def\g{\mathfrak{g}}
\def\L{\mathbb L}
\def\J{\mathbb J}
\def\P{\mathbf P}
\def\bL{\mathbf L}
\def\q{\mathbf q}
\def\BL{\mathrm {BL}}
\def\B{\mathcal B}
\def\CC{\mathbf C}
\def\D{{\mathcal D}^+}
\def\DP{{\mathcal D}^{+,\mathcal{P}}}
\def\RR{\mathbf R}
\def\m{\mathrm m}
\def\h{\hat}
\def\t{\widetilde}
\def\supp{\mathrm{supp}}
\def\dim{\mathrm{dim}}
\def\Var{\mathrm{Var}}
\def\PP{\mathrm{P}}
\def\BV{\mathrm{BV}}
\def\F{\mathcal{F}}
\def\a{\alpha}
\def\aa{\boldsymbol{\alpha}}
\def\ar{\mathbf{a}}
\def\id{\mathrm{id}}
\newtheorem{theorem}{Theorem}
\newtheorem{lemma}{Lemma}
\newtheorem{remark}{Remark}
\newtheorem{corollary}{Corollary}
\newtheorem{proposition}{Proposition}
\title{Loomis--Whitney inequalities on corank $1$ Carnot groups}
\author{Ye Zhang}
\date{}
\begin{document}

\renewcommand{\theequation}{\thesection.\arabic{equation}}
 \maketitle

\vspace{-1.0cm}

\bigskip
	
	{\bf Abstract.} In this paper we provide another way to deduce the Loomis--Whitney inequality on higher dimensional Heisenberg groups $\H^n$ based on the one on the first Heisenberg group $\H^1$ and the known nonlinear Loomis--Whitney inequality (which has more projections than ours). Moreover, we generalize the result to the case of corank $1$ Carnot groups and products of such groups. Our main tool is the modified equivalence between the Brascamp--Lieb inequality and the subadditivity of the entropy developed in \cite{CC09}. 
	
	\medskip
	
	{\bf Mathematics Subject Classification (2020):} {\bf 26D15; 28A75; 28D20; 39B62; 43A80}
	
	\medskip
	
	{\bf Key words and phrases:} Corank $1$ Carnot group; Loomis--Whitney inequality; Brascamp--Lieb inequality; Entropy; Sobolev inequality; Isoperimetric inequality

%\subclass[2020]{28A75}
%\keywords{Corank $1$ Carnot group, Loomis--Whitney inequality, Brascamp--Lieb inequality, Entropy, Sobolev inequality, Isoperimetric inequality}

\bigskip

\section{Introduction}
\setcounter{equation}{0}

\subsection{Loomis--Whitney and Brascamp--Lieb inequalities}\label{ss11}

On Euclidean space $\R^k$, $k \in \N^* = \{1,2,\ldots\}$, recall the {\it  Loomis--Whitney inequality on $\R^k$} is the following geometric inequality:
\begin{align}\label{LWc}
\m_k(E) \le \prod_{j = 1}^k \m_{k - 1}(\P_j(E))^{\frac{1}{k - 1}}, \qquad \forall \, E \ \mbox{measurable}.
\end{align}

Here $\m_k$ is the Lebesgue measure on $\R^k$ and for $1 \le j \le k$, the projection $\P_j : \R^k \to \R^{k  -1}$ is defined by $\P_j (x) = \h{x}_j$, where $\h{x}_j$ denotes the point in $\R^{k -1}$ obtained by simply deleting the $j$-th coordinate of $x \in \R^{k}$.

The original proof of the Loomis--Whitney inequality dates back to \cite{LW49} by a discrete argument. It is one of the most important inequalities in mathematics and has applications to not only Sobolev inequalities and embedding \cite{AF03, S02} but also multilinear Kakeya inequality \cite{G15} (see also \cite{BCT06}). For more details and applications of the Loomis--Whitney inequality, we refer to \cite{BN03, BZ88, CGG16, F92} and the references therein. %The list is far from exhaustive and in fact is rather limited.

The Loomis--Whitney inequality has a far-reaching generalization, called the Brascamp--Lieb inequality, which also generalizes the classical H\"older and Young's inequalities. It was first formulated in \cite{BL76} to find the best constants in Young's inequality.

In general,  the {\it  Brascamp--Lieb inequality} has the following form
\begin{align}\label{BL}
\int_{\R^k} \prod_{j  =1}^m f_j^{q_j} (L_j (x)) dx \le \BL(\bL,\q) \prod_{j = 1}^{m} \left( \int_{\R^{k_j}} f_j(t) dt \right)^{q_j}, 
\end{align}
for all non-negative measurable functions $f_j$ on $\R^{k_j}$, $1 \le j  \le m$.
Here $k, m \in \N^*$, and for every $1 \le j \le m$, $q_j \ge 0$, $k_j \in \N^*$, and $L_j : \R^k \to \R^{k_j}$ is a linear surjection. Furthermore, $\bL := (L_1, \ldots, L_m)$, $\q := (q_1, \ldots, q_m)$, and $(\bL, \q)$ is called the {\it Brascamp--Lieb datum}. The {\it Brascamp--Lieb constant} $\BL(\bL,\q)$ denotes the smallest constant for which \eqref{BL} holds and it could be $+\infty$.

A fundamental theorem in the study of the Brascamp--Lieb inequality is the  Lieb's theorem (see \cite[Theorem 6.2]{L90}) which shows that the Brascamp--Lieb constant is exhausted by centered Gaussian functions. As a result, in \cite{BCCT08} the authors showed the following theorem.

\begin{theorem}[Theorem 1.13 and Proposition 2.8 of \cite{BCCT08}, see also Theorem 6 of \cite{B982}]\label{tfBL}
Let $(\bL,\q)$ be a Brascamp--Lieb datum. Then the Brascamp--Lieb constant $\BL(\bL,\q)$ is finite if and only if we have the scaling condition
\begin{align}\label{scaling}
 k = \sum_{j = 1}^m q_j k_j,
\end{align}
and the dimension condition 
\begin{align}\label{dim}
\dim(V) \le \sum_{j = 1}^m q_j \dim(L_j V), \qquad \forall   \ \mbox{subspace $V \subset \R^k$}.
\end{align}
In particular, we have $\BL(\bL,\q) = 1$ if the following geometric condition holds
\begin{align}\label{geoc}
L_j L_j^* = \id_{k_j}, \quad \forall \, 1 \le j \le m, \quad  \sum_{j = 1}^m q_j L_j^* L_j = \id_k.
\end{align}
Here $\id_k$ denotes the identity matrix on $\R^k$.
\end{theorem}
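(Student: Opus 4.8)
The plan is to pass to centered Gaussian inputs via Lieb's theorem and reduce the whole statement to an inequality between determinants. Plugging $f_j(t)=e^{-\langle A_jt,t\rangle}$ with $A_j$ a positive-definite symmetric $k_j\times k_j$ matrix, one has $\prod_{j}f_j^{q_j}(L_jx)=e^{-\langle Mx,x\rangle}$ with $M=\sum_{j=1}^mq_jL_j^*A_jL_j$, and the Gaussian integrals are elementary. Their ratio equals $\pi^{(k-\sum_jq_jk_j)/2}\,\Phi((A_j))^{1/2}$, where
\begin{align*}
\Phi\big((A_j)\big)=\frac{\prod_{j=1}^m(\det A_j)^{q_j}}{\det\!\big(\sum_{j=1}^mq_jL_j^*A_jL_j\big)}.
\end{align*}
Since Gaussians are admissible test functions, the supremum of this ratio is always a lower bound for $\BL(\bL,\q)$; by Lieb's theorem it is in fact \emph{equal} to $\BL(\bL,\q)$. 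Thus everything reduces to understanding the finite-dimensional quantity $\sup_{A_j\succ0}\Phi((A_j))$.

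For the necessity of the two conditions I would use only the (unconditional) lower bound. Replacing every $A_j$ by $\lambda A_j$ multiplies the ratio by $\lambda^{(\sum_jq_jk_j-k)/2}$; letting $\lambda\to0$ and $\lambda\to\infty$ forces the exponent to vanish, which is exactly the scaling condition \eqref{scaling}. For the dimension condition, fix a subspace $V\subset\R^k$ and take $A_j=\epsilon\,\Pi_{L_jV}+\Pi_{(L_jV)^\perp}$ with $\epsilon\to0^+$, where $\Pi$ denotes orthogonal projection. Then $\prod_j(\det A_j)^{q_j}=\epsilon^{\sum_jq_j\dim(L_jV)}$, while $\langle Mx,x\rangle=\epsilon\sum_jq_j|L_jx|^2$ for $x\in V$ shows that $M$ acquires $\dim V$ eigenvalues of order $\epsilon$, so to leading order $\det M\asymp\epsilon^{\dim V}$. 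Hence $\Phi\asymp\epsilon^{\sum_jq_j\dim(L_jV)-\dim V}$, and boundedness as $\epsilon\to0^+$ forces $\sum_jq_j\dim(L_jV)\ge\dim V$, i.e.\ \eqref{dim}.

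The sufficiency direction is the real obstacle: assuming \eqref{scaling} and \eqref{dim}, one must show $\sup_{A_j\succ0}\Phi$ is finite. By scale invariance the only way $\Phi$ could blow up is along sequences in which some eigenvalues of the $A_j$ tend to $0$ or $\infty$; the strategy is to show that \eqref{dim}, applied to the subspaces singled out by such degenerations, exactly caps the growth rate computed as in the previous paragraph. The clean way to organize this is an induction on $k$: if \eqref{dim} holds strictly for every proper nonzero subspace, a normalization plus compactness argument produces a maximizer and hence finiteness; if some proper nonzero $V$ saturates \eqref{dim}, one factors the datum into its restriction to $V$ and the induced datum on $\R^k/V$, both of lower dimension and again satisfying \eqref{scaling}--\eqref{dim}, and multiplies the two finite constants. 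Establishing this factorization along a critical subspace---equivalently, running the Bennett--Carbery--Christ--Tao heat-flow monotonicity argument that drives arbitrary inputs towards Gaussians---is the technical heart of the theorem.

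Finally, assuming the geometric condition \eqref{geoc}, I would obtain $\BL(\bL,\q)=1$ directly from the Gaussian functional. At $A_j=\id_{k_j}$ the identity $\sum_jq_jL_j^*L_j=\id_k$ gives $\Phi=1$, whence $\BL\ge1$. For the reverse inequality, parametrize $A_j=e^{S_j}$ with $S_j$ symmetric, so that $\sum_jq_j\log\det A_j=\sum_jq_j\,\mathrm{tr}\,S_j$ is linear in $(S_j)$ while $(S_j)\mapsto\log\det\big(\sum_jq_jL_j^*e^{S_j}L_j\big)$ is convex (the matrix analogue of the convexity of $\log$-sum-exp). Thus $-\log\Phi$ is convex; using $L_jL_j^*=\id_{k_j}$ a one-line trace computation shows $S_j=0$ is a critical point, hence a global minimum, at which $-\log\Phi=0$. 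Therefore $\Phi\le1$ for all $A_j\succ0$, and with Lieb's theorem this yields $\BL(\bL,\q)\le1$, completing the identification $\BL(\bL,\q)=1$.
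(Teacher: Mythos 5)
The first thing to note is that the paper does not prove this statement at all: it is quoted from Bennett--Carbery--Christ--Tao (Theorem 1.13 and Proposition 2.8 of \cite{BCCT08}, see also Barthe), so your attempt is measured against the literature rather than against an argument in the text. Your Gaussian reduction and both necessity arguments are correct: the ratio formula $\pi^{(k-\sum_jq_jk_j)/2}\Phi^{1/2}$, the scaling argument $A_j\mapsto\lambda A_j$, and the degeneration $A_j=\epsilon\,\Pi_{L_jV}+\Pi_{(L_jV)^\perp}$ (where only the upper bound $\det M\le C\epsilon^{\dim V}$, not $\asymp$, is actually needed) are sound, granted Lieb's theorem as a black box --- which is fair, since the paper invokes it the same way. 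The genuine gap in the ``if and only if'' is the sufficiency half: that \eqref{scaling} and \eqref{dim} imply $\sup_{A_j\succ0}\Phi<+\infty$. Your proposal names the two ingredients (a compactness argument when \eqref{dim} is strict for all proper nonzero subspaces, and a factorization of the Brascamp--Lieb constant along a critical subspace into the restricted datum on $V$ and the quotient datum on $\R^k/V$) but proves neither; the factorization lemma in particular is a substantive theorem of \cite{BCCT08}, not a routine verification. As written, this half is an outline of the known proof, not a proof.

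Second, your treatment of the geometric case rests on a false lemma. The function $(S_j)_j\mapsto\log\det\bigl(\sum_jq_jL_j^*e^{S_j}L_j\bigr)$ is \emph{not} convex in the Euclidean sense on tuples of symmetric matrices; there is no ``matrix log-sum-exp'' theorem of this form. Already $S\mapsto\log\langle b,e^Sb\rangle$ fails to be convex on $2\times2$ symmetric matrices: along $S(t)=\mathrm{diag}(a,-a)+tX$, with $X_{12}=X_{21}=1$ and $X_{11}=X_{22}=0$, a Duhamel computation shows the second derivative of $t\mapsto\log\langle b,e^{S(t)}b\rangle$ at $t=0$ is strictly negative for suitable $b=(1,\beta)$ as soon as $\sinh^4a+\sinh^2a+a^2>a\sinh2a$ (for instance $a=2$, $\beta^2=13$). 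Taking the geometric datum $m=2$, $L_1=L_2=\id_2$, $q_1=q_2=\tfrac12$ and letting $e^{S_2}$ have one huge eigenvalue on $b^{\perp}$ and one tiny eigenvalue on $b$, this non-convexity passes in the limit to $\log\det(\tfrac12e^{S_1}+\tfrac12e^{S_2})$, so joint convexity fails even for geometric data. Fortunately your critical-point argument does not need Euclidean convexity: it suffices to know that $\phi(t)=\log\det M(t)-t\sum_jq_j\,\mathrm{tr}\,S_j$, with $M(t)=\sum_jq_jL_j^*e^{tS_j}L_j$, is convex along each one-parameter family $t\mapsto(e^{tS_j})_j$, since these curves start at the identity tuple (where your computation $\phi(0)=\phi'(0)=0$ is correct) and reach every positive tuple at $t=1$. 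This weaker convexity is true: because $S_j$ commutes with $e^{tS_j}$, one has $M'=\sum_jN_j^*S_jN_j$ and $M''=\sum_jN_j^*S_j^2N_j$ with $N_j=q_j^{1/2}e^{tS_j/2}L_j$, and the required inequality $\mathrm{tr}(M^{-1}M'')\ge\mathrm{tr}\bigl((M^{-1}M')^2\bigr)$ follows from the Kadison--Schwarz inequality $\Psi(T^2)\succeq\Psi(T)^2$ applied to the unital positive map $\Psi(\oplus_jT_j)=M^{-1/2}\bigl(\sum_jN_j^*T_jN_j\bigr)M^{-1/2}$ (equivalently, from Ando's inequality for positive maps and geometric means; this is exactly the geodesic convexity used in the operator-scaling literature). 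With that substitution, your argument does yield $\Phi\le1$ and hence $\BL(\bL,\q)=1$ via Lieb's theorem; without it, the final step of your proposal is unsound.
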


\begin{remark}\label{rset}
If the geometric condition \eqref{geoc} holds, we call $(\bL,\q)$  {\it geometric Brascamp--Lieb datum} and \eqref{BL} {\it geometric Brascamp--Lieb inequality}. In particular, if $m = k$ with $k \ge 2$ and for $1 \le j \le k$, $q_j = \frac{1}{k - 1}$, $k_j = k - 1$, and $L_j = \P_j$, then we have
\begin{align}\label{LWf}
\int_{\R^k} \prod_{j  =1}^k f_j^{\frac{1}{k - 1}}(\P_j(x)) dx \le \prod_{j = 1}^k \left( \int_{\R^{k - 1}} f_j(t) dt\right)^{\frac{1}{ k - 1}}
\end{align}
holds for all non-negative measurable functions $f_1, \ldots, f_k$ on $\R^{k - 1}$. Furthermore, for every measurable set $E$, if we choose $f_j = \chi_{\P_j(E)}$, since $E \subset \cap_{j = 1}^k \P_j^{-1}( \P_j(E) )$, we have $\chi_{E} \le \prod_{j = 1}^k \chi_{\P_j(E)} \circ \P_j = \prod_{j = 1}^k \chi_{\P_j(E)}^{\frac{1}{k - 1}} \circ \P_j $ and thus \eqref{LWf} implies the Loomis--Whitney inequality \eqref{LWc}. As a result, \eqref{LWf} is also called  Loomis--Whitney inequality (for functions).
\end{remark}

In the literature, there are several approaches to prove Loomis--Whitney and Brascamp--Lieb inequalities, such as using rearrangement inequality \cite{BL76, BLL74}, optimal transport \cite{B982, B98, CGS19}, heat flow monotonicity \cite{BCCT08, CLL04}, and entropy \cite{CC09}.

The target of this paper is to establish (acutally re-establish for the case of Heisenberg groups) the Loomis--Whitney inequality (for functions) on corank $1$ Carnot groups by the method of entropy. As far as the author knows, due to the non-commutative nature of the underlying group, to apply the  heat flow monotonicity approach is not an easy task. %However, we don't know whether the other approaches are available or not on Carnot groups.

\subsection{Corank $1$ Carnot groups}\label{ss21}

A  {\it Carnot group} is a connected and simply connected Lie group $\G$ whose Lie algebra $\g$ 
has a stratification $\g = \bigoplus_{j=1}^s \g_j$, that is, a linear splitting $\g = \bigoplus_{j=1}^s \g_j$ where $[\g_1,\g_j]=\g_{j+1}$ for $j =1,\dots,s-1$ and $[\g_1,\g_s]=\{0\}$. If $\g_s \ne \{0\}$, the number $s$ is called the {\it step} of $\G$. The {\it homogeneous dimension} of $\G$ is given by $Q:= \sum_{j = 1}^s j \mathrm{dim} \g_j$.

We call a Carnot group a {\it corank $1$ Carnot group} if the step $s = 2$ and $\mathrm{dim} \g_2 = 1$. Corank $1$ Carnot groups are generalizations of Heisenberg groups and they may admit nontrivial abnormal geodesics. As a result, many topics are studied on corank $1$ Carnot groups (cf. \cite{BKS19, LZ19, R16}). For a complete characterization of the geodesics as well as cut loci on corank $1$ Carnot groups, we refer to \cite{BKS19, R16}. We will see in the following that the Loomis--Whitney inequalities on corank $1$ Carnot groups display different features from the ones on Heisenberg groups. See Remark \ref{noniso} below for more details.

By using the (group) exponential map, we can always identify a Carnot group $\G$ with its Lie algebra $\g$ (cf. \cite[Chapter 3]{BLU07}). 
More precisely, after choosing a suitable basis on $\g$, the corank $1$ Carnot group $\G$ can be identified with $\R^{d + 2n + 1} \cong \R^{d + 2n} \times \R$ with the group structure 
\[
(x,t) \cdot (x',t') = \left(x + x', t + t' + \frac{1}{2} \sum_{j = 1}^n \alpha_j (x_{d + 2j - 1}x_{d + 2j}' - x_{d + 2j} x_{d + 2j - 1}' ) \right).
\]
Here $d \in \N = \{0,1,\ldots\}$, $n \in \N^* = \{1,2,\ldots\}$, and 
\[
0 < \a_1 \le \ldots \le \a_n < +\infty.
\]
We refer to \cite[Section 3]{R16} or \cite[Section 1]{BKS19} for more details about this identification.
In the following we denote the group $\G$ by $\H(d,\aa) $ with $\aa := (\a_1,\ldots,\a_n)$. When $d = 0$ and $\aa = (1,\ldots,1) \in \R^n$, $\H(d,\aa) $ is just the usual $n$-th Heisenberg group and we denote it by $\H^n$.

A canonical basis of $\g_1$ of $\H(d,\aa)$ is given by the following  left-invariant vector fields:
\[
\X_j := 
\frac{\partial}{\partial x_j}, \qquad \forall \, 1 \le j \le d
\]
and 
\[
\X_{d + 2j - 1} := \frac{\partial}{\partial x_{d + 2j -1}} - \frac{\alpha_j }{2} x_{d + 2j} \frac{\partial}{\partial t}, \quad \X_{d + 2j } := \frac{\partial}{\partial x_{d + 2j}} + \frac{\alpha_j }{2}  x_{d + 2j - 1} \frac{\partial}{\partial t}, \quad  \forall \, 1 \le j \le n.
\]
The basis of $\g_2$ is just given by $\T := \frac{\partial}{\partial t}$. Note that the nontrivial bracket relations on $\g$ are $[\X_{d + 2j - 1}, \X_{d + 2j }] = \alpha_j \T$, $1 \le j \le n$.

The {\it horizontal gradient} and {\it canonical sub-Laplacian} on $\H(d,\aa)$ are given respectively by
\[
\nabla := (\X_1, \ldots, \X_{d + 2n}), \qquad \mbox{and} \qquad \Delta := \sum_{j = 1}^{d + 2n} \X_j^2.
\]

Following \cite{FP22}, we give the definition of our (nonlinear) projections on $\H(d,\aa)$. To this end, for $1 \le j \le d + 2n +1$, we define the subgroup of $\H(d,\aa)$ by $\L_j := \R e_j$, where $e_j$ denotes the vector in $\R^{d + 2n + 1}$ with the $j$-th coordinate $1$ and the other coordinates $0$. Now define 
\[
\J_j := \{(x,t) \in \H(d,\aa) : \, x_j = 0\}, \quad \forall \, 1 \le j \le d + 2n, \qquad 
\J_{d + 2n + 1} = \{(x,0) \in \H(d,\aa)\}.
\]
Now fix a $j \in \{1, \ldots, d + 2n + 1\}$. It is easy to see that for every $(x,t) \in \H(d,\aa)$, there is a unique decomposition $(x,t) = (y,s) \cdot \ell e_j$ with $(y,s) \in \J_j$ and $\ell e_j \in \L_j$. There is a natural way to identify $\J_j$ with $\R^{d + 2n}$ by deleting the $0$ on the $j$-th coordinate. So we just define the {\it $j$-th projection on $\H(d,\aa)$}, $\pi_j: \H(d,\aa) \cong \R^{d + 2n + 1} \to \R^{d + 2n}$, by first finding the unique element in $\J_j$ from the  decomposition above and then identifying with an element in $\R^{d + 2n}$.

To be more precise, we can write them down explicitly:
\begin{align}\label{pro1}
\pi_j(x,t) &= (\h{x}_j,t),  \qquad \forall \, 1 \le j \le d, \qquad \pi_{d + 2n + 1}(x,t) = x, \\
\label{pro2}
\pi_{d + 2j - 1}(x,t) &= \left(\h{x}_{d + 2j - 1}, t + \frac{\alpha_j}{2} x_{d + 2j - 1} x_{d + 2j}\right), \\\label{pro3}
\pi_{d + 2j}(x,t)   &= \left(\h{x}_{d + 2j}, t - \frac{\alpha_j}{2} x_{d + 2j - 1} x_{d + 2j}\right), \quad \forall \, 1 \le j \le n.
\end{align}
Recall that $\h{x}_j$ denotes the point in $\R^{d + 2n -1}$ obtained by simply deleting the $j$-th coordinate of $x \in \R^{d + 2n}$.

\begin{remark}\label{Rie}
Different from \cite{FP22}, we also introduce the extra projection $\pi_{d + 2n + 1}$ since we will use the nonlinear Loomis--Whitney inequalities in our proof of the main theorem, which requires more projections. See Proposition \ref{pWL} (as well as the discussion before it) below for more details.
\end{remark}

From definition, for every $j \in \{1,\ldots,d + 2n + 1\}$ it is easy to see that $(x,t)$ and $(x,t) \cdot \ell e_j$ have the same $j$-th projection on $\H(d,\aa)$. 

\begin{lemma}\label{tri}
On the corank $1$ Carnot group $\H(d,\aa)$, for any $1 \le j \le d + 2n + 1$, we have 
\[
\pi_j(x,t) = \pi_j((x,t) \cdot \ell e_j), \qquad \forall \, (x,t) \in \H(d,\aa), \ell \in \R.
\]
In particular, a function $F$ on $\H(d,\aa)$ is invariant under $\L_j$ in the sense that 
\begin{align}\label{invar}
F(x,t) = F((x,t) \cdot \ell e_j), \qquad \forall \, (x,t) \in \H(d,\aa), \ell \in \R
\end{align}
if and only if there exists a function $\t{F}$ on $\R^{d + 2n}$ such that $F = \t{F} \circ \pi_j$.
\end{lemma}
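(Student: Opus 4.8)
The plan is to separate the statement into the projection identity and the resulting factorization criterion, and to treat the identity first, since the criterion follows from it together with the quotient structure of $\pi_j$.

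First I would establish $\pi_j(x,t) = \pi_j((x,t)\cdot \ell e_j)$. The cleanest route is group-theoretic: recall that each $(x,t)$ admits a \emph{unique} decomposition $(x,t) = (y,s)\cdot m e_j$ with $(y,s)\in\J_j$ and $m e_j \in \L_j$, and that $\pi_j(x,t)$ is by definition the image of the factor $(y,s)$ under the identification $\J_j \cong \R^{d+2n}$. Since $\L_j = \R e_j$ is a one-parameter subgroup — a fact one checks directly from the group law, because the symplectic form $\sum_k \a_k(x_{d+2k-1}x'_{d+2k}-x_{d+2k}x'_{d+2k-1})$ vanishes on any pair of vectors both supported on the single coordinate $e_j$, so that $m e_j \cdot \ell e_j = (m+\ell)e_j$ — we obtain $(x,t)\cdot \ell e_j = (y,s)\cdot (m+\ell)e_j$, which is again a decomposition of the required form. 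By uniqueness its $\J_j$-factor is still $(y,s)$, whence $\pi_j((x,t)\cdot\ell e_j) = \pi_j(x,t)$. As an alternative and a sanity check one may simply insert the coordinates of $(x,t)\cdot\ell e_j$ into the explicit formulas \eqref{pro1}--\eqref{pro3}: the $\ell$-shift in the $j$-th spatial coordinate is erased by the map $\h{x}_j$, while the cross-term produced in the $t$-coordinate by the group law exactly cancels the change in the $\frac{\a_k}{2}x_{d+2k-1}x_{d+2k}$ correction.

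With the identity in hand, the factorization criterion follows. For the ``if'' direction, if $F = \t F \circ \pi_j$ then $F((x,t)\cdot\ell e_j) = \t F(\pi_j((x,t)\cdot\ell e_j)) = \t F(\pi_j(x,t)) = F(x,t)$, which is exactly \eqref{invar}. For the ``only if'' direction I would exploit that $\pi_j$ restricts to a bijection $\J_j \to \R^{d+2n}$: given $u \in \R^{d+2n}$ there is a unique $(y,s)\in\J_j$ with $\pi_j(y,s)=u$, namely the preimage under the identification $\J_j\cong\R^{d+2n}$, so I can \emph{define} $\t F(u) := F(y,s)$. To verify $F = \t F\circ\pi_j$, take any $(x,t)$, write $(x,t) = (y,s)\cdot m e_j$ as above, and apply the invariance \eqref{invar} at the base point $(y,s)$ to get $F(x,t) = F((y,s)\cdot m e_j) = F(y,s) = \t F(\pi_j(y,s)) = \t F(\pi_j(x,t))$, the last equality being the already-proved identity.

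I do not anticipate a serious obstacle; the real content is that the fibers of $\pi_j$ are precisely the right cosets of $\L_j$, so that after the identification $\J_j\cong\R^{d+2n}$ the map $\pi_j$ is the quotient by $\L_j$ and the criterion reduces to the standard fact that a function descends to the quotient if and only if it is constant on cosets. The two points that genuinely use the structure of $\H(d,\aa)$, and which I would state carefully, are that $\L_j$ is a subgroup (so that $m e_j \cdot \ell e_j$ remains in $\L_j$) and that the $\J_j\cdot\L_j$ decomposition is unique (needed both to make $\pi_j$ well defined and to identify its fibers with cosets); both are immediate from the explicit group law but are the load-bearing facts.
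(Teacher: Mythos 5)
Your proof is correct and follows exactly the route the paper intends: the paper in fact omits a proof of this lemma, asserting just before its statement that ``from definition\ldots it is easy to see'' that $(x,t)$ and $(x,t)\cdot \ell e_j$ have the same $j$-th projection, which is precisely your argument via the unique $\J_j\cdot\L_j$ decomposition together with the fact that $\L_j$ is a subgroup. Your explicit verifications --- that $m e_j \cdot \ell e_j = (m+\ell)e_j$ because the symplectic cross-term vanishes on a single coordinate, and the quotient-style construction of $\t{F}$ from the bijection $\pi_j|_{\J_j}:\J_j\to\R^{d+2n}$ --- are exactly the details the paper leaves to the reader.
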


\begin{remark}\label{rel}
By \cite[Lemma 1.5.4]{B18}, for smooth function $F$, \eqref{invar} is equivalent to say $\X_j F = 0$ if $1 \le j \le d + 2n$ and $\T F = 0$ if $j  = d + 2n + 1$. In fact, in the proof of \cite[Theorem 3.8]{KLZZ23}, the auxiliary function can be constructed by using the projection on $\H^1$ w.r.t. the subgroup $\R (a,b,0)$. Actually, this idea can be further generalized to give similar results on general Carnot groups. 
\end{remark}

Finally we define the {\it dilation} on $\H(d,\aa)$ by 
\begin{align}\label{defdi}
\delta_r(x,t) = (rx, r^2 t), \qquad \forall \, r > 0, (x,t) \in \H(d,\aa).
\end{align}

For every $1 \le j \le d + 2n + 1$, $\J_j$ also admits a dilation structure inherited from $\H(d,\aa)$. After identifying with $\R^{d + 2n}$, we define the {\it $j$-th dilation structure} on $\R^{d + 2n}$, denoted by $\delta^{(j)}_r$, by requiring the following equation holds:
\begin{align}\label{dlc}
\delta^{(j)}_r \circ \pi_j = \pi_j \circ \delta_r, \qquad \forall \, r > 0, 1 \le j \le d + 2n + 1. 
\end{align}

\subsection{Main result}\label{ss13}

Now we can state our main theorem of this paper. In the following we use $\|f\|_p$ to denote the $L^p$ norm of the function $f$.

\begin{theorem}\label{t1}
On corank $1$ Carnot group $\H(d,\aa)$, it holds that
\begin{align}\label{LWco1}
\int_{\H(d,\aa)} \prod_{j = 1}^{d + 2n} f_j(\pi_j(x,t)) dxdt \le \CC(d,\aa) \prod_{j = 1}^d \|f_j\|_{d + 2n + 1} \prod_{j = d + 1}^{d + 2n} \|f_j\|_{\frac{n(d + 2n + 1)}{n + 1}},
\end{align}
for all non-negative measurable functions $f_1, \ldots, f_{d + 2n}$ on $\R^{d + 2n}$, where 
\begin{align}\label{defCC}
\CC(d,\aa) := \frac{\|\RR\|_{\frac{3}{2} \to 3}^{\frac{3}{d + 2n + 1}}}{\left( \prod_{j = 1}^n \a_j\right)^{\frac{1}{n(d + 2n + 1)}}}.
\end{align}
Here $\|\RR\|_{\frac{3}{2} \to 3} < +\infty$ denotes the operator norm of the Radon transform $\RR$ from $L^{\frac{3}{2}}(\R^2)$ to $L^3(\SS^1 \times \R)$.
\end{theorem}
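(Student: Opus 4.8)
The plan is to transfer \eqref{LWco1} into a subadditivity statement for the entropy and then to build that statement out of a few elementary pieces, following the dual formulation of \cite{CC09}. First I would homogenise: writing $g_j:=f_j^{p_j}$ with $p_j=d+2n+1$ for $1\le j\le d$ and $p_j=\frac{n(d+2n+1)}{n+1}$ for $d<j\le d+2n$, the claim \eqref{LWco1} becomes $\int_{\H(d,\aa)}\prod_j g_j(\pi_j(x,t))^{c_j}\,dxdt\le \CC(d,\aa)\prod_j(\int g_j)^{c_j}$ with $c_j=\frac{1}{d+2n+1}$ for $j\le d$ and $c_j=\frac{n+1}{n(d+2n+1)}$ otherwise. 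Each $\pi_j$ is a submersion onto $\R^{d+2n}$ whose fibres are exactly the $\L_j$-cosets (Lemma \ref{tri}), and along these the Lebesgue measure disintegrates with constant density; this is precisely the regularity needed to invoke the modified Brascamp--Lieb/entropy equivalence of \cite{CC09}. That equivalence turns the displayed inequality into the entropy bound
\[
\mathrm{Ent}(\rho)\ \ge\ \sum_{j=1}^{d+2n} c_j\,\mathrm{Ent}\big((\pi_j)_\#\rho\big)\ -\ \log \CC(d,\aa),
\]
required for every probability density $\rho$ on $\H(d,\aa)$, where $\mathrm{Ent}(g)=\int g\log g$ and $(\pi_j)_\#\rho$ denotes the push-forward density on $\R^{d+2n}$. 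It is this entropy inequality that I would actually prove.

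Second, I would assemble the right-hand side from building blocks, exploiting the additivity of entropy inequalities. The genuinely sub-Riemannian input is the first Heisenberg group: the two-projection inequality on $\H^1$, equivalent to the $L^{3/2}(\R^2)\to L^3(\SS^1\times\R)$ bound for the Radon transform, which produces the factor $\|\RR\|_{\frac{3}{2}\to 3}$ in \eqref{defCC}. Applying this block-wise to each of the $n$ Heisenberg pairs $(x_{d+2j-1},x_{d+2j})$ yields $n$ entropy inequalities, while the $d$ abelian directions are governed by the ordinary (linear) Euclidean subadditivity of entropy. The parameters $\a_j$ are removed before passing to $\H^1$ by the block-wise dilation inherited from $\delta_r$ through \eqref{dlc}, each normalisation $\a_j\mapsto 1$ contributing a Jacobian that accumulates into the factor $(\prod_j\a_j)^{-1/(n(d+2n+1))}$. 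Taking the convex combination of these elementary entropy inequalities with weights dictated by the target $c_j$ is what should reproduce both the exponents and the constant $\CC(d,\aa)$.

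Third, the role of the extra projection $\pi_{d+2n+1}$ (Remark \ref{Rie}) is to supply a \emph{balanced} auxiliary inequality. The nonlinear Loomis--Whitney inequality of Proposition \ref{pWL}, which uses all $d+2n+1$ projections, reads in entropy form as a subadditivity bound containing the term $\mathrm{Ent}\big((\pi_{d+2n+1})_\#\rho\big)=\mathrm{Ent}(\mathrm{law}(X))$, the entropy of the horizontal marginal $X\in\R^{d+2n}$. Since for $j\le d+2n$ the $x$-marginal of $(\pi_j)_\#\rho$ is exactly the leave-one-out marginal $\widehat{X}_j$ of $X$ (the $t$-shear in \eqref{pro2}--\eqref{pro3} not affecting the $x$-projections), I would apply the Euclidean Loomis--Whitney bound for $X$ together with the entropy calculus to absorb $\mathrm{Ent}(\mathrm{law}(X))$ into the remaining terms. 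The net effect should be to trade the extra projection for an increase of the Heisenberg exponents to $\frac{n(d+2n+1)}{n+1}$, yielding exactly the $(d+2n)$-projection inequality of Theorem \ref{t1}; admissibility of the resulting weights is to be checked against the dimension condition \eqref{dim} of Theorem \ref{tfBL} for the linear skeleton.

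The main obstacle is the coupling of the $n$ Heisenberg blocks and the abelian directions through the single shared centre $t$: unlike the Euclidean case the system is not a product, so the push-forwards $(\pi_j)_\#\rho$ are not independent and the block inequalities cannot simply be tensorised. Making the convex combination close up therefore requires checking that the weights $c_j$ satisfy the nonlinear analogue of the geometric/dimension conditions \eqref{geoc}--\eqref{dim} along every subspace mixing the block coordinates with $t$, and, simultaneously, that the elimination of $\pi_{d+2n+1}$ is compatible with those weights at the level of the marginals. Verifying this compatibility — equivalently, that the modified equivalence of \cite{CC09} applies to the precise datum $(\pi_j,c_j)$ arising here and that the constants multiply correctly to \eqref{defCC} — is where the real work lies; the remaining steps are bookkeeping of Jacobians and $L^p$ normalisations.
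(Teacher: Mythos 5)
Your proposal follows the same skeleton as the paper's proof: reduce \eqref{LWco1} via the entropy--Brascamp--Lieb duality of \cite{CC09} (Theorem \ref{cr1}) to a subadditivity statement, feed in the $\H^1$/Radon inequality of Theorem \ref{tH1} block-wise over the $n$ Heisenberg pairs, and close the system by combining the Euclidean Loomis--Whitney inequality with the nonlinear inequality of Proposition \ref{pWL} (whose extra projection $\pi_{d+2n+1}$ supplies the entropy of the horizontal marginal $X$) so as to eliminate $S(X)$. However, there is a genuine gap at the central step, and you flag it yourself: you never explain \emph{how} the $\H^1$ inequality can be applied to each pair $(x_{d+2j-1},x_{d+2j})$ when all pairs share the single vertical variable $t$, so that the pushforwards $(\pi_j)_\#\rho$ are coupled and nothing tensorises. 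This is not a compatibility condition to be ``checked against the dimension condition \eqref{dim} for the linear skeleton'': the linearization of the $d+2n$ projections has \emph{infinite} Brascamp--Lieb constant (this is exactly why the paper cannot apply Theorem \ref{tnBL} directly), so no verification of geometric/dimension conditions on the weights $c_j$ can make a convex-combination argument close. What is actually needed — and what the paper supplies — is a disintegration argument: by the chain rule for entropy (Proposition \ref{prel}) one writes $S(\pi_{2j-1}(X,T))=S(X^{j})+\int S(\,\cdot\,|X^{j}=y)\,f_{X^{j}}(y)\,dy$ where $X^{j}$ collects the \emph{other} $2n-2$ horizontal coordinates, one checks that conditioning commutes with the nonlinear pushforwards (Proposition \ref{pcon}), applies the $\H^1$ entropy inequality to each conditional three-dimensional density, and integrates. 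This conditioning step is the real content of the proof and is absent from your outline.

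Two further points of divergence from the paper's argument are worth noting. First, the conditioning mechanism naturally produces the \emph{leave-two-out} marginal entropies $S(\P_{2j-1,2j}(X))$, so the Euclidean input used to convert them into $S(X)$ is the geometric Brascamp--Lieb inequality for the $n$ projections $\P_{2j-1,2j}:\R^{2n}\to\R^{2n-2}$ with weights $\tfrac{1}{n-1}$ (giving $\sum_j S(\P_{2j-1,2j}(X))\le (n-1)S(X)$), not the leave-one-out Loomis--Whitney bound for $X$ that you invoke; with your leave-one-out marginals the bookkeeping does not obviously close, and you have not carried it out. Second, for the general case $d\ge 1$ the paper does not fold the abelian directions into the same convex combination; it proves product-stability of the entropic subadditivity (Propositions \ref{ppro} and \ref{ppro2}, again via conditioning plus Proposition \ref{psub}) and writes $\H(d,\aa)\cong\R^d\times\H(0,\aa)$. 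Your instinct that ``the $d$ abelian directions are governed by Euclidean subadditivity'' is correct in spirit, but as stated it is not a proof. In summary: right strategy, correct identification of all the ingredients and of the obstacle, but the mechanism that overcomes the obstacle — conditional entropy and its compatibility with the nonlinear projections — is missing, and the substitute you propose would not work.
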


In \cite[Section 5]{FP22}, the authors established Theorem \ref{t1} when $d = 0$. Although the constant is not written explicitly, it can be obtained if we track the constants from the argument there carefully.

To be more precise, they first converted the problem of the Loomis--Whitney inequality on the first Heisenberg group $\H^1$ to the  one of the boundedness from $L^{\frac{3}{2}}(\R^2)$ to $L^3(\SS^1 \times \R)$ of the {\it Radon transfrom (or X-ray transform)} $\RR$ defined by
\[
\RR f(\sigma,s) := \int_{\langle x , \sigma \rangle = s} f(x) dx, \qquad \forall \, \sigma \in \SS^1, s \in \R.
\]
Here $\SS^1 := \{x \in \R^2 : \, |x| = 1\}$ is the unit circle on $\R^2$, $\langle \cdot, \cdot \rangle$ denotes the inner product on $\R^2$, and $dx$ is the $1$-dimensional Lebesgue measure on the line $\{x \in \R^2 : \, \langle x , \sigma \rangle = s\}$. See \cite{OS82} for more details about the boundedness of the Radon transfrom $\RR$.

Then their result for the Loomis--Whitney inequality on the first Heisenberg group $\H^1$ is stated as follows. Recall that we usually use $(x,y,t)$ to denote a point in the first Heisenberg group $\H^1$ and by \eqref{pro2}-\eqref{pro3}, the two projections are defined by
\[
\pi_1(x,y,t) = \left(y, t + \frac{1}{2} xy\right), \qquad \pi_2(x,y,t) = \left(x, t - \frac{1}{2}xy\right).
\]

\begin{theorem}[Theorem 2.4 of \cite{FP22}] \label{tH1}
On the first Heisenberg group $\H^1$, it holds that
\begin{align}\label{LWH1}
\int_{\H^1} f_1\left(y, t + \frac{1}{2} xy\right) f_2 \left(x, t - \frac{1}{2}xy\right)  dxdydt \le  \|\RR\|_{\frac{3}{2} \to 3} \|f_1\|_{\frac{3}{2}} \|f_2\|_{\frac{3}{2}},
\end{align}
for all non-negative measurable functions $f_1, f_2$ on $\R^{2}$.
\end{theorem}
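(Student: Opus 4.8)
The plan is to reduce the bilinear integral in \eqref{LWH1} to a \emph{single} application of the boundedness of the Radon transform $\RR : L^{3/2}(\R^2) \to L^3(\SS^1 \times \R)$, by observing that integrating out one Heisenberg variable produces exactly a line integral, i.e.\ a value of $\RR f_2$. First I would remove the twist from the second argument of $f_1$: for fixed $x,y$ the substitution $w = t + \frac12 xy$ is a translation in $t$ with Jacobian $1$, so by Tonelli's theorem (all functions are non-negative) the left-hand side of \eqref{LWH1} becomes
\[
I := \int_{\R^3} f_1(y, w)\, f_2(x, w - xy)\, dx\, dy\, dw .
\]
The crucial point is that $f_1(y,w)$ does not depend on $x$, so the $x$-integration can be carried out with $y,w$ frozen. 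The curve $x \mapsto (x, w - xy)$ traces the line $\ell_{y,w} = \{\xi \in \R^2 : \langle \xi, \sigma(y)\rangle = s_{y,w}\}$ with unit normal $\sigma(y) = (y,1)/\sqrt{1+y^2}$ and offset $s_{y,w} = w/\sqrt{1+y^2}$, and its arclength element is $\sqrt{1+y^2}\, dx$. Hence, setting $G_y(w) := \int_\R f_2(x, w-xy)\, dx$, I obtain the key identity $G_y(w) = (1+y^2)^{-1/2}\, \RR f_2\big(\sigma(y), s_{y,w}\big)$.

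Next I would apply H\"older's inequality in the two remaining variables $(y,w) \in \R^2$ with the conjugate exponents $\frac32$ and $3$ (note $\frac{2}{3} + \frac13 = 1$), writing
\[
I = \int_{\R^2} f_1(y,w)\, G_y(w)\, dy\, dw \le \|f_1\|_{3/2}\, \|G\|_{L^3(\R^2)},
\]
where $\|f_1\|_{L^{3/2}(\R^2_{y,w})}$ is literally $\|f_1\|_{3/2}$. It then remains to bound $\|G\|_{L^3(\R^2)}$. Using the key identity and the substitution $s = w/\sqrt{1+y^2}$ in the inner integral, a short computation gives $\int_\R |G_y(w)|^3\, dw = (1+y^2)^{-1}\int_\R |\RR f_2(\sigma(y), s)|^3\, ds$. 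The factor $(1+y^2)^{-1}$ is exactly the Jacobian of the parametrization $y \mapsto \sigma(y)$ of the circle: writing $\sigma(y) = (\cos\phi, \sin\phi)$ one has $\cot\phi = y$, so $|d\phi| = (1+y^2)^{-1}\, dy$, and $y \mapsto \sigma(y)$ sweeps the upper semicircle $\{\sigma \in \SS^1 : \sigma_2 > 0\}$ as $y$ runs over $\R$. Therefore
\[
\|G\|_{L^3(\R^2)}^3 = \int_{\{\sigma_2>0\}} \int_\R |\RR f_2(\sigma, s)|^3\, ds\, d\sigma \le \|\RR f_2\|_{L^3(\SS^1\times\R)}^3 .
\]

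Finally, applying the boundedness $\|\RR f_2\|_{L^3(\SS^1\times\R)} \le \|\RR\|_{\frac32\to3}\,\|f_2\|_{3/2}$ and combining with the H\"older bound yields $I \le \|\RR\|_{\frac32\to3}\,\|f_1\|_{3/2}\,\|f_2\|_{3/2}$, which is precisely \eqref{LWH1}. The only step requiring care---and the main, though modest, obstacle---is the passage from the $y$-integral to the circle integral: one must check that the Jacobian $(1+y^2)^{-1}$ matches $d\sigma$ exactly, and observe that $y \mapsto \sigma(y)$ covers only a semicircle, so that the last step is an inequality (sufficient, since $|\RR f_2|^3 \ge 0$) rather than an equality. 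Everything else is a direct change of variables together with a single H\"older estimate.
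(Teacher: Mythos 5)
Your proof is correct, and it takes essentially the same route as the proof of Theorem 2.4 in \cite{FP22} that the paper cites (the paper itself quotes this result without reproving it): integrate out one variable to recognize the line integral $\RR f_2$, apply H\"older with the conjugate exponents $\tfrac32$ and $3$ in the remaining variables, and invoke the Oberlin--Stein bound $\|\RR\|_{\frac{3}{2}\to 3}$, with the $(1+y^2)^{-1}$ Jacobian correctly absorbed into the arc-length measure on the (semi)circle. No gaps.
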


For higher dimensional Heisenberg groups $\H^n \, (n \ge 2)$, the authors in \cite{FP22} used the induction argument on the estimates corresponding to the extreme points of the Newton polytope defined in \cite[Section 3]{S11} and applied the multilinear interpolation to obtain the Loomis--Whitney inequality $\H^n$. This argument can be modified to the general $\aa$ case with $d = 0$. See \cite[Section 5]{FP22} for more details. Recently it is generalized one step further to Brascamp--Lieb type inequalities on Heisenberg groups in \cite{HS24}.

\begin{remark}\label{noniso}
On the right-hand side of \eqref{LWco1} in Theorem \ref{t1}, the exponent (of the $L^p$ space) for the first $d$ terms  is different from the exponent for the other terms. In fact, it is because $\X_1, \ldots, \X_d$ commute with all the vector fields and thus will not generate any nontrivial element in $\g_2$. More precisely, from \cite[Theorem 2]{S11}, we cannot expect any estimate of which the corresponding point lies outside of the Newton polytope defined in \cite[Section 3]{S11}. Furthermore, the inequality should also be invariant under dilation, that is, replacing $f_j$ by $f_j \circ \delta_r^{(j)}$, we will obtain the same inequality. In general, these two restrictions will not allow the same exponent for every term on corank $1$ Carnot group $\H(d,\aa)$ with $d \in \N^*$. 

For example, on the simplest example $\H(1,1) \cong \R \times \H^1$. If there is a $p$ such that 
\[
\int_{\R \times \H^1} \prod_{j = 1}^3 f_j \circ \pi_j \le C \prod_{j = 1}^3 \|f_j\|_{p}
\]
for some $C > 0$. By dilation invariance we get $p = \frac{12}{5}$ but the corresponding point $(\frac{5}{3},\frac{5}{3},\frac{5}{3})$ lies outside of the Newton polytope  $[1,+\infty) \times [2,+\infty) \times [2,+\infty)$. On the other hand, our inequality \eqref{LWco1} always corresponds to a point on the boundary of the Newton polytope and thus \cite[Theorem 3]{S11} cannot be applied.

This phenomenon implies that even at the level of Loomis--Whitney inequality on general Carnot group, the situation is more difficult than we expected and we should take the Lie bracket generating relations into consideration. See also Theorem \ref{tpro} for more examples.
\end{remark}

%State the necessay condition and say sth about the differences between the ideal case (or Heisenberg case)

%Give a remark about the ideal case [need check!!!]

\subsection{Nonlinear Brascamp--Lieb inequalities}\label{ss12}

Since our $\pi_j$ is nonlinear for $d + 1 \le j \le d + 2n$, it is natural to resort to the known results for nonlinear Loomis--Whitney or Brascamp--Lieb inequalities. In fact, by the induction-on-scales argument, in \cite{BBBCF20} the authors obtained the following nonlinear variant of Brascamp--Lieb inequality. See also  \cite{BCW05} for the case of nonlinear Loomis--Whitney inequalities.

\begin{theorem}[Theorem 1.1 of \cite{BBBCF20}]\label{tnBL}
Let $(\bL,\q)$ be a Brascamp--Lieb datum and suppose that $B_j : \R^k \to \R^{k_j}$ are $C^2$ submersions in a neighborhood of a point $x_0$ and $d B_j(x_0) = L_j$ for $1 \le j \le m$. Then for every $\epsilon > 0$ there exists a neighborhood $U$ of $x_0$ such that 
\begin{align}\label{locBL}
\int_U   \prod_{j  =1}^m f_j^{q_j} (B_j (x)) dx  \le (1 + \epsilon) \BL(\bL,\q) \prod_{j = 1}^{m} \left( \int_{\R^{k_j}} f_j(t) dt \right)^{q_j}, %\qquad \forall \, f_j \ \mbox{non-negative}.
\end{align}
holds for all non-negative measurable functions $f_j$ on $\R^{k_j}$, $j = 1, \ldots, m$.
\end{theorem}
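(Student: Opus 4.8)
\emph{Proof proposal.} The plan is to establish \eqref{locBL} by the \emph{induction-on-scales} method, using the linear Brascamp--Lieb inequality \eqref{BL} together with Theorem \ref{tfBL} both as the base case and as the engine of the iteration. First I would dispose of the trivial case: if $\BL(\bL,\q) = +\infty$ there is nothing to prove, so I may assume $\BL(\bL,\q) < +\infty$, whence by Theorem \ref{tfBL} the scaling condition \eqref{scaling} and the dimension condition \eqref{dim} hold. Translating in the domain I may take $x_0 = 0$, and replacing each $f_j$ by $f_j(\,\cdot\, + B_j(0))$ (which leaves $\int f_j$ unchanged) I may take $B_j(0) = 0$.

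The second step is a rescaling that converts the local statement into an asymptotic one. For $\delta > 0$ set $B_j^\delta(x) := \delta^{-1} B_j(\delta x)$ on the unit ball. By the chain rule $d B_j^\delta(x) = d B_j(\delta x) \to L_j$ uniformly as $\delta \to 0$, while $d^2 B_j^\delta(x) = \delta\, d^2 B_j(\delta x) \to 0$; thus the $C^2$ hypothesis gives $B_j^\delta \to L_j$ in $C^2$ on the unit ball. Because the scaling condition \eqref{scaling} makes the inequality invariant under the simultaneous dilation of the domain and of all target variables, proving \eqref{locBL} on $U = \delta \cdot (\text{unit ball})$ with constant $(1+\epsilon)\BL(\bL,\q)$ is equivalent to proving, on the fixed unit ball, the same inequality for the rescaled maps $B_j^\delta$. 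Hence it suffices to show: if $\t B_j$ are $C^2$ submersions on the unit ball with $\|\t B_j - L_j\|_{C^2}$ small, then the best constant in $\int_{B(0,1)} \prod_j f_j^{q_j}(\t B_j(x))\,dx \le A \prod_j (\int f_j)^{q_j}$ is at most $(1+\epsilon)\BL(\bL,\q)$.

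The heart of the matter is a self-improving recursion for this local constant. Let $A(\eta)$ denote the supremum of the best local constant on the unit ball over all $C^2$ submersion data whose $C^2$-distance to the fixed linear datum $(\bL,\q)$ is at most $\eta$. Partitioning the unit ball into $\sim \rho^{-k}$ subcubes of side $\rho$ and rescaling each to unit size, the restriction of $\t B_j$ to a subcube centred at $x_i$ becomes a $C^2$ submersion whose deviation from its own linearisation $d\t B_j(x_i)$ is only $O(\eta\rho)$ — crucially, the $C^2$ bound makes the second-order part \emph{shrink} by the factor $\rho$ under this rescaling — while $d\t B_j(x_i)$ itself is $O(\eta)$-close to $L_j$. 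Applying the inductive bound $A(\rho\eta)$ on each subcube and then reassembling by applying the \emph{linear} inequality \eqref{BL} to the coarse datum formed by the linearisations $d\t B_j(x_i)$ (the powers of $\rho$ cancelling by the scaling condition \eqref{scaling}) yields a recursion of the shape
\[
A(\eta) \le (1 + \omega(\eta))\, A(\rho\eta),
\]
where $\omega(\eta) \to 0$ encodes the continuity (stability) of the linear constant $\BL(\cdot,\q)$ at data of finite constant. Iterating gives $A(\eta) \le \BL(\bL,\q)\prod_{n \ge 0}(1 + \omega(\rho^n \eta))$, and since the moduli $\omega(\rho^n\eta)$ are summably small, the product drops below $1+\epsilon$ once $\eta$ is small enough, which is the desired conclusion.

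The main obstacle is the reassembly (``gluing'') step: after localising to subcubes one controls each $\int_{\text{subcube}} \prod_j f_j^{q_j}(\t B_j)$ only in terms of the masses $\prod_j(\int_{\t B_j(\text{subcube})} f_j)^{q_j}$ over the \emph{images} of that subcube, and one must recombine these into the single global product $\prod_j(\int_{\R^{k_j}} f_j)^{q_j}$ without losing a constant. A given $f_j$ is ``seen'' by many subcubes, and it is precisely the linear Brascamp--Lieb inequality, applied to the induced finite datum, that accounts for these overlaps with the sharp constant; making this rigorous, and keeping the accumulated factors $\prod(1+\omega)$ below $1+\epsilon$, is the delicate part, and is where the $C^2$ (rather than merely $C^1$) regularity is indispensable, since it is the quadratic remainder that renders the per-scale errors summable. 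For the special case of Loomis--Whitney data one can bypass the general machinery by a direct slicing argument using Fubini and H\"older (compare \cite{BCW05}), but the induction-on-scales scheme above is what handles an arbitrary Brascamp--Lieb datum.
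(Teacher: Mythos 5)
Note first that the paper does not prove this statement at all: Theorem \ref{tnBL} is quoted verbatim from \cite{BBBCF20} (where it is Theorem 1.1) and is used purely as a black box in the proof of Proposition \ref{pWL}. So the only meaningful comparison is with the proof in \cite{BBBCF20}, and your sketch does follow that paper's strategy in outline: the reduction of the local statement to an asymptotic one via the rescalings $B_j^\delta(x)=\delta^{-1}B_j(\delta x)$ (your observation that the scaling condition \eqref{scaling} makes this reduction exact is correct), followed by an induction on scales in which the linear inequality \eqref{BL} serves both as base case and as the gluing device.

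As a proof, however, the proposal has two genuine gaps, and they are exactly where all the work of \cite{BBBCF20} lies. First, the reassembly step, which you yourself flag as ``the delicate part,'' is not bookkeeping but the theorem itself. After applying the inductive bound on each subcube $Q_i$ of side $\rho$, one holds $\sum_i \prod_j \bigl(\int_{\widetilde B_j(Q_i)} f_j\bigr)^{q_j}$; since each unit of mass of $f_j$ is seen by roughly $\rho^{-(k-k_j)}$ subcubes, a naive H\"older summation over $i$ loses the divergent factor $\rho^{k(1-\sum_j q_j)}$ (divergent as $\rho\to 0$ whenever some $k_j<k$). To avoid this one must replace each $f_j$ by a local average at scale $\rho$, apply the linear inequality for the coarse datum, and show that the resulting uncertainty-principle loss is $1+o(1)$ per scale rather than $O(1)$; nothing in your sketch does this. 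Second, your recursion $A(\eta)\le(1+\omega(\eta))\,A(\rho\eta)$ presumes that $\bL\mapsto\BL(\bL,\q)$ is continuous at every datum with finite constant --- and in fact quantitatively so, since summability $\sum_{n\ge 0}\omega(\rho^n\eta)<\infty$ requires a power-type modulus, not mere continuity. This is not a consequence of Theorem \ref{tfBL}; it is a deep, relatively recent theorem (continuity and stability of the Brascamp--Lieb constant, due to Bennett, Bez, Cowling, Flock and Lee), and it is precisely the ingredient that allowed \cite{BBBCF20} to treat arbitrary data. Your closing suggestion that the Loomis--Whitney case can be bypassed by ``Fubini and H\"older'' is also misleading: \cite{BCW05} is itself an induction-on-scales argument, and for the application in this paper (Proposition \ref{pWL}) the sharp factor $1+\epsilon$ is essential, since the dilation argument converts it into the constant $1$; any slicing argument that loses a constant would not suffice. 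In short, the proposal is a faithful outline of the architecture of the cited proof, but it outsources its two hardest steps; citing \cite{BBBCF20}, as the paper does, is the appropriate resolution.
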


However, our situation is more or less like the multilinear Radon-like transforms in  \cite{S11, TW03} with no finite Brascamp--Lieb constant. To be more precise, note that for every $1 \le j \le d + 2n$, we have $d \pi_j (0) = \P_j$. However, considering the $1$-dimensional subspace $\{\L_j\}_{j = 1}^{d + 2n + 1}$, by the two conditions in Theorem \ref{tfBL}, we cannot find any Brascamp--Lieb datum with finite Brascamp--Lieb constant. Thus, to apply Theorem \ref{tnBL}, we have to add the extra projection $\pi_{d + 2n +1}$. Another difficulty is that Theorem \ref{tnBL} is local in nature because of the  appearance of the neighborhood $U$ in \eqref{locBL}. Fortunately, this can also be overcomed owing to the dilation structure of $\H(d,\aa)$. In fact, we have the following nonlinear Loomis--Whitney inequality on $\H(d,\aa)$.

\begin{proposition}\label{pWL}
On corank $1$ Carnot group $\H(d,\aa)$, it holds that
\begin{align}\label{pLWco1}
\int_{\H(d,\aa)} \prod_{j = 1}^{d + 2n + 1} f_j(\pi_j(x,t)) dxdt \le \prod_{j = 1}^{d + 2n + 1} \|f_j\|_{d + 2n },
\end{align}
for all non-negative measurable functions $f_1, \ldots, f_{d + 2n + 1}$ on $\R^{d + 2n}$.
\end{proposition}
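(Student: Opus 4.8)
The plan is to deduce \eqref{pLWco1} from the nonlinear Brascamp--Lieb inequality (Theorem \ref{tnBL}) applied at the origin, and then to upgrade the resulting local statement to a global one using the dilation structure \eqref{dlc}. Set $k := d + 2n + 1$, so that each $\pi_j : \H(d,\aa) \cong \R^k \to \R^{k-1}$ lands in $\R^{k-1} = \R^{d+2n}$. First I would record that the coordinate-deleting projections $\P_1, \ldots, \P_k$ on $\R^k$, with weights $q_j := \frac{1}{k-1}$ and dimensions $k_j := k-1$, form a geometric Brascamp--Lieb datum: one checks $\P_j \P_j^* = \id_{k-1}$ and
\[
\sum_{j=1}^k q_j \P_j^* \P_j = \frac{1}{k-1}\sum_{j=1}^k \left(\id_k - e_j e_j^*\right) = \frac{1}{k-1}(k-1)\,\id_k = \id_k,
\]
so by Theorem \ref{tfBL} (see Remark \ref{rset}) its Brascamp--Lieb constant equals $1$.

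Next I would verify the hypotheses of Theorem \ref{tnBL} at $x_0 = 0$. Each $\pi_j$ is a polynomial map of degree at most $2$, hence $C^2$, and differentiating \eqref{pro1}--\eqref{pro3} shows $d\pi_j(0) = \P_j$ for every $1 \le j \le k$, since the bilinear terms in \eqref{pro2}--\eqref{pro3} vanish to first order at the origin. As $\P_j$ is surjective and $d\pi_j$ is continuous, each $\pi_j$ is a submersion near $0$. Applying Theorem \ref{tnBL} with $B_j = \pi_j$, $L_j = \P_j$, and the datum above (whose constant is $1$) to the functions $f_j^{k-1}$, for every $\epsilon > 0$ I obtain a neighborhood $U$ of $0$ on which
\[
\int_U \prod_{j=1}^{k} f_j(\pi_j(x,t))\, dxdt \le (1 + \epsilon)\prod_{j=1}^{k}\|f_j\|_{k-1},
\]
the substitution turning \eqref{locBL} into $L^{k-1}$ norms. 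Shrinking $U$, I may assume it is a homogeneous (dilation) ball $B_\rho$ centered at $0$, since such balls form a neighborhood basis at the origin and restricting the domain only decreases the left-hand side.

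The crux is the globalization via \eqref{dlc}. For $\lambda > 0$ I change variables $(x,t) = \delta_\lambda(y,s)$; the Jacobian is $\lambda^Q$ with $Q = d + 2n + 2$, and \eqref{dlc} gives $\pi_j(\delta_\lambda(y,s)) = \delta^{(j)}_\lambda(\pi_j(y,s))$, so the left-hand side over $\delta_\lambda(B_\rho) = B_{\lambda\rho}$ becomes $\lambda^Q \int_{B_\rho}\prod_j (f_j \circ \delta^{(j)}_\lambda)(\pi_j(y,s))\,dyds$. Reading off the $j$-th dilation from \eqref{pro1}--\eqref{pro3}, one finds $\delta^{(j)}_\lambda(u,\tau) = (\lambda u, \lambda^2 \tau)$ for $1 \le j \le d+2n$ (Jacobian exponent $J_j = k$) and $\delta^{(k)}_\lambda(x) = \lambda x$ (Jacobian exponent $J_k = k-1$), whence $\|f_j \circ \delta^{(j)}_\lambda\|_{k-1} = \lambda^{-J_j/(k-1)}\|f_j\|_{k-1}$ with $\sum_{j=1}^k J_j = (k-1)k + (k-1) = (k-1)(k+1)$. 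Applying the local inequality on $B_\rho$ to the functions $f_j \circ \delta^{(j)}_\lambda$ and using the exact cancellation $\sum_j J_j/(k-1) = k + 1 = Q$, the powers of $\lambda$ cancel and the same bound, with the same constant $(1+\epsilon)$, holds on $B_{\lambda\rho}$ for every $\lambda > 0$. Since $B_{\lambda\rho} \uparrow \H(d,\aa)$ as $\lambda \to \infty$, monotone convergence gives $\int_{\H(d,\aa)}\prod_j f_j\circ\pi_j \le (1+\epsilon)\prod_j\|f_j\|_{k-1}$, and letting $\epsilon \to 0$ yields \eqref{pLWco1}.

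The main obstacle is precisely this local-to-global passage: Theorem \ref{tnBL} is intrinsically local, and the argument succeeds only because the scaling \eqref{dlc} is compatible with the $L^{k-1}$ norms in the sharp way encoded by $\sum_j J_j/(k-1) = Q$. The reason homogeneous balls (rather than annuli or an overlapping cover) suffice is that each $\delta_\lambda(B_\rho)$ is again a neighborhood of the origin exhausting $\H(d,\aa)$, so no delicate multilinear bookkeeping over the pieces of a cover is required.
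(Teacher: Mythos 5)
Your proposal is correct and follows essentially the same route as the paper: verify that the coordinate-deleting projections form a geometric Brascamp--Lieb datum with constant $1$, apply Theorem \ref{tnBL} at the origin to the functions $f_j^{d+2n}$, and globalize the resulting local inequality via the dilations \eqref{dlc}. Your explicit bookkeeping of the Jacobian exponents and the cancellation $\sum_j J_j/(k-1) = Q$ is exactly the computation the paper compresses into ``by \eqref{dlc} after a change of variables,'' so no gap remains.
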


\begin{proof}
Since for every $1 \le j \le d + 2n + 1$, we have $d \pi_j (0) = \P_j$ and it is easy to check that 
\[
\P_j\P_j^* = \id_{d + 2n}, \qquad  \frac{1}{d + 2n} \sum_{j = 1}^{d + 2n + 1} \P_j^* \P_j = \id_{d + 2n + 1}.
\]
Thus, by Theorem \ref{tfBL}, it is the case of geometric Brascamp--Lieb inequality and the Brascamp--Lieb constant is $1$. Thus, it follows from Theorem \ref{tnBL} that for every $\epsilon > 0$, there exists a neighborhood $U$ such that (with $f_j^{d + 2n}$ replacing the original $f_j$)
\[
\int_U   \prod_{j  =1}^{d + 2n + 1} f_j (\pi_j (x,t)) dxdt  \le (1 + \epsilon) \prod_{j = 1}^{d + 2n + 1}\| f_j \|_{d + 2n}, \qquad \forall \, f_j \ \mbox{non-negative, measurable}.
\]
Now for every $r > 0$, we use $\{f_j \circ \delta_r^{(j)}\}_{j = 1}^{d + 2n + 1}$ to replace $\{f_j\}_{j = 1}^{d + 2n + 1}$ and obtain
\[
\int_{\delta_r(U)}   \prod_{j  =1}^{d + 2n + 1} f_j (\pi_j (x,t)) dxdt  \le (1 + \epsilon) \prod_{j = 1}^{d + 2n + 1}\| f_j \|_{d + 2n}, \qquad \forall \, f_j \ \mbox{non-negative, measurable}.
\]
by \eqref{dlc} after a change of variables. Then letting $r \to +\infty$ first and then $\epsilon \to 0^+$, we obtain \eqref{pLWco1}.
\end{proof}

For other nonlinear results like multilinear Radon-like transforms in  \cite{S11, TW03}, as we mentioned before in Remark \ref{noniso} that our inequality \eqref{LWco1} always corresponding to a point on the boundary of the Newton polytope and thus \cite[Theorem 3]{S11} cannot be applied.

\subsection{Idea of the proof}\label{ss15}

In this article we will give the proof of Theorem \ref{t1} based on Theorem \ref{tH1} and Proposition \ref{pWL}. In other words, we find another way to deduce the Loomis--Whitney inequality on $\H(0,\aa)$ from the one on $\H^1$ other than the approach given in \cite{FP22} and generalize it to the case of $\H(d,\aa)$ with $d \in \N^*$. 

To be more precise, our main tool is the equivalence between the Brascamp--Lieb inequality and the subadditivity of the entropy (see Section \ref{s2} for more details). It is worthwhile to mention that from the entropy point of view, the multilinear interpolation argument in \cite{FP22} becomes more transparent. This idea is originally from \cite{CC09} and we can further show that in our case, we only need to know the subadditivity of the entropy  on a suitable class where basic entropy operations (such as the conditional entropy and pushforward entropy) are feasible. See Theorem \ref{cr1} below. %That's why we prove the equivalence theorem again.

Moreover, the subadditivity of the entropy behaves well when taking the product space, which enables us to  slightly generalize the result to the case of $\H(d,\aa)$ with $d \in \N^*$, as well as products of corank $1$ Carnot groups.

Comparing to the argument in \cite{FP22}, our approach is easier to track the constant. However, we do not know how to deduce the estimates corresponding to the extreme points of the Newton polytope defined in \cite[Section 3]{S11} by our approcah. On the other hand, we don't know how to establish similar Loomis--Whitney inequalities on general Carnot groups using this approach as well.

%3. We give some improvement of the dual problem we do not necessarily establish the entropy on the whole on Euclidean spaces since the approximation maybe complex in general

%However, we start from the first Heisenberg group. 

\subsection{Structure of the paper}\label{ss14}

We recall some basic properties of differential entropy and give the proof of the modified version of the equivalence theorem (namely Theorem \ref{cr1}) between the Brascamp--Lieb inequality and the and subadditivity of the entropy in Section \ref{s2}. In Section \ref{s3} we give the proof of Theorem \ref{t1}. Finally in Section \ref{s4} we give applications to Gagliardo--Nirenberg--Sobolev inequalities and isoperimetric inequalities, as well as generalizations to product spaces.

\section{Preliminaries}\label{s2}
\setcounter{equation}{0}

\subsection{Differential entropy}\label{ss22}

\subsubsection{General definition}

The notion of entropy dates back to mathematical physics \cite{B64} as well as information theory \cite{S48}. For more details about differential entropy, we refer to \cite[Chapter 8]{CT06} or \cite{I93}.

On a measure space $(\Omega, \S, \mu)$, for a non-negative measurable function $f$ on $\Omega$ with $\int_{\Omega} f d\mu = 1$, we define the {\it (differential) entropy} by
\begin{align}\label{defen}
S(f) := \int_\Omega f(x) \ln{f(x)} d\mu(x) = \int_{\supp f} f(x) \ln{f(x)} d\mu(x).
\end{align}
Here $\supp f := \{x \in \Omega: \, f(x) > 0 \}$ and we adopt the convention that $0 \ln{0} = 0$.

Since we will use the modified duality result of \cite{CC09} in the proof of Theorem \ref{t1} so we stick to the notations and definitions there.  In fact, our definition here differs from the original definition of the differential entropy by a negative sign. 

Also notice that the integral in \eqref{defen} does not always exist. For the sake of simplicity, in this paper we always assume that $S(f) \in \R$, or equivalently
\[
\int_{\supp f} f(x) |\ln{f(x)}| d\mu(x) < +\infty.
\]
In this case, we also say that the entropy is finite or the entropy exists. This happens when $f$ is bounded and $\supp f$ is a set of finite measure.

\subsubsection{Entropic inequality}

Assume $\phi$ is a measurable function on $(\Omega,\S,\mu)$ such that $\int_\Omega e^\phi d\mu < +\infty$. Since $s \mapsto \ln{s}$ is strictly concave on $(0,+\infty)$, by Jensen's inequality,
\begin{align*}
\int_\Omega \ln\left( \frac{e^\phi}{f} \right) f d\mu = \int_{\supp f} \ln\left( \frac{e^\phi}{f} \right) f d\mu \le \ln \left( \int_{\supp f} e^\phi d\mu \right) \le \ln \left( \int_\Omega e^\phi d\mu \right).
\end{align*}
Here and in the following we interpret $0 \cdot \infty = 0$. This gives the following proposition (see also \cite[p. 378]{CC09} or \cite[p. 236]{BGL14}).

\begin{proposition}\label{pei}
Assume $S(f) \in \R$ and $\int_\Omega e^\phi d\mu < +\infty$. Then the following inequality holds:
\begin{align}\label{ei}
\int_\Omega f \phi d\mu \le S(f) + \ln \left( \int_\Omega e^\phi d\mu \right),
\end{align}
with the equality attains if and only if $e^\phi = f$ (in the almost everywhere sense).
\end{proposition}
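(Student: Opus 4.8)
The plan is to prove the entropic inequality \eqref{ei} directly from Jensen's inequality applied to the concave logarithm, essentially reading off the computation that was already displayed just before the statement. The key observation is that the quantity $\int_\Omega f\phi\, d\mu - S(f)$ can be rewritten, using the definition \eqref{defen} of $S(f)$, as $\int_{\supp f} f (\phi - \ln f)\, d\mu = \int_{\supp f} f \ln(e^\phi/f)\, d\mu$, so that the claimed inequality is equivalent to $\int_{\supp f} f \ln(e^\phi/f)\, d\mu \le \ln\left(\int_\Omega e^\phi\, d\mu\right)$.

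\medskip

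The main steps proceed as follows. First I would note that since $\int_\Omega f\, d\mu = 1$, the measure $f\, d\mu$ restricted to $\supp f$ is a probability measure, and $s \mapsto \ln s$ is strictly concave on $(0,+\infty)$. Applying Jensen's inequality to the probability measure $f\, d\mu$ and the integrand $e^\phi/f$ gives
\[
\int_{\supp f} \ln\left(\frac{e^\phi}{f}\right) f\, d\mu \le \ln\left(\int_{\supp f} \frac{e^\phi}{f}\, f\, d\mu\right) = \ln\left(\int_{\supp f} e^\phi\, d\mu\right).
\]
Second, I would bound $\int_{\supp f} e^\phi\, d\mu \le \int_\Omega e^\phi\, d\mu$ since $e^\phi \ge 0$, and use monotonicity of $\ln$. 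Rearranging the resulting chain of inequalities and recalling the identity from the previous paragraph yields exactly \eqref{ei}. Throughout one uses the stated conventions $0\ln 0 = 0$ and $0\cdot\infty = 0$ so that the integrals over $\Omega$ and over $\supp f$ agree and all terms are well defined under the standing hypotheses $S(f)\in\R$ and $\int_\Omega e^\phi\, d\mu < +\infty$.

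\medskip

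For the equality case, I would trace back through the two inequalities used. Equality in Jensen's inequality for the strictly concave function $\ln$ forces the argument $e^\phi/f$ to be constant $\mu$-almost everywhere on $\supp f$; combined with $\int_\Omega f\, d\mu = 1$, the scaling condition pins that constant down. Equality in the passage from $\int_{\supp f} e^\phi\, d\mu$ to $\int_\Omega e^\phi\, d\mu$ forces $e^\phi = 0$, hence $f = 0$, off $\supp f$ (up to null sets). Putting these together gives $e^\phi = f$ almost everywhere, and conversely substituting $e^\phi = f$ makes both sides of \eqref{ei} equal. I expect the only delicate point — the ``hard part'' of an otherwise routine argument — to be the careful bookkeeping of the equality case, in particular ensuring that the constant from Jensen's equality is correctly normalized and that the behaviour outside $\supp f$ is handled using the strictness of the concavity of $\ln$ rather than left ambiguous.
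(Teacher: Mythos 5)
Your proof of the inequality \eqref{ei} itself is correct and is precisely the paper's argument: rewrite $\int_\Omega f\phi\,d\mu - S(f)$ as $\int_{\supp f}\ln\bigl(e^\phi/f\bigr)f\,d\mu$, apply Jensen's inequality for the strictly concave logarithm to the probability measure $f\,d\mu$ on $\supp f$, and then enlarge $\int_{\supp f}e^\phi\,d\mu$ to $\int_\Omega e^\phi\,d\mu$. This is exactly the displayed computation the paper gives before the proposition.

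However, your equality-case argument contains a genuine error: nothing ``pins the constant down'' to $1$. Equality in Jensen forces $e^\phi = c f$ almost everywhere on $\supp f$ for some constant $c>0$, and equality in the second step forces $e^\phi = 0$ off $\supp f$; but the normalization $\int_\Omega f\,d\mu = 1$ only yields $\int_\Omega e^\phi\,d\mu = c$, which is not constrained. Indeed, for any $c>0$, taking $e^\phi = cf$ gives
\begin{align*}
\int_\Omega f\phi\,d\mu = \int_{\supp f} f\,(\ln c + \ln f)\,d\mu = S(f) + \ln c = S(f) + \ln\left(\int_\Omega e^\phi\,d\mu\right),
\end{align*}
so equality holds even though $e^\phi \neq f$ when $c\neq 1$. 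The correct equality condition is therefore $e^\phi = cf$ almost everywhere for some constant $c>0$ (equivalently, $\phi - \ln f$ is a.e.\ constant on $\supp f$ and $e^\phi=0$ off it); the proposition's stated condition $e^\phi=f$ is only the normalized representative of this family. To be fair, the paper's own proof never addresses the equality case at all, so your attempt goes further than the source — but the step claiming that $\int_\Omega f\,d\mu=1$ forces $c=1$ does not hold and cannot be repaired without weakening the stated conclusion.
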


\begin{remark} \label{mi}
In \eqref{ei}, the term $\int_\Omega f \phi d\mu \in [-\infty, +\infty)$ and it could be $-\infty$. A trivial example is that $\phi \equiv -\infty$.
\end{remark}

\subsubsection{Conditional differential entropy}

In this subsection, we recall some basic facts about conditional differential entropy, which will play an important role in our proof of Theorem \ref{t1}. 

If $X$ is a continuous random vector (taking value in $(\Omega,\S,\mu)$) with density $f$ (writing $X \sim f$ in short), we will use $S(X)$ to denote $S(f)$ instead.

Now assume $X$ and $Y$ take values in $(\Omega,\S,\mu)$ and $(\Omega',\S',\mu')$ respectively, and $(X,Y) \sim f$ on $(\Omega \times \Omega', \S \times \S', \mu \times \mu')$. Then we have $Y \sim f_Y$ with the {\it marginal density} 
\begin{align}\label{dmd}
f_Y(y) := \int_\Omega f(x,y) d\mu(x).
\end{align}
For $y \in \supp {f_Y} = \{y \in \Omega': \, f_Y(y) > 0\}$ with $f_Y(y) < +\infty$, the {\it conditional density of $X$ given $Y = y$}  is defined by
\begin{align}\label{dcd}
f(x|y) := \frac{f(x,y)}{f_Y(y)}.
\end{align}
From definition it is easy to see that $\int_\Omega f(\cdot|y) d\mu = 1$.
As a result, for almost every $y \in \supp {f_Y}$, we can define the {\it conditional entropy of $X$ given $Y = y$} by
\begin{align}\label{dce}
S(X|Y = y) := S(f(\cdot|y))
\end{align}
if it exists (that is, $S(f(\cdot|y)) \in \R$). The following result about conditional entropy $S(X|Y = y)$ is not hard to check.

\begin{proposition}[(8.33) of \cite{CT06}] \label{prel}
Suppose $S(X,Y), S(Y) \in \R$ and $(X,Y) \sim f$ on $(\Omega \times \Omega', \S \times \S', \mu \times \mu')$. Then for almost every $y \in \supp f_Y$ we have
\begin{align}\label{relS0}
S(X|Y = y) f_Y(y) + f_Y(y) \ln{f_Y(y)} = \int_\Omega f(x,y) \ln{f(x,y)} d\mu(x).
\end{align}
In particular, we have $S(X|Y = y) \in \R$ for almost every $y \in \supp f_Y$ and 
\begin{align}\label{relS}
S(X,Y) = S(Y) + \int_{\supp f_Y} S(X|Y = y) f_Y(y) d\mu'(y).
\end{align}
\end{proposition}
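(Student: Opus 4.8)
The plan is to prove the pointwise identity \eqref{relS0} first by a direct computation from the definitions \eqref{dmd}, \eqref{dcd}, and \eqref{dce}, and then to obtain \eqref{relS} by integrating \eqref{relS0} over $\supp f_Y$ and invoking Fubini's theorem. The algebra is trivial; the only real work lies in the measure-theoretic bookkeeping needed to guarantee that every integral involved is absolutely convergent for almost every $y$.

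First I would record two consequences of the hypotheses. Since $(X,Y) \sim f$, the function $f$ is a probability density, so $\int_{\Omega'} f_Y \, d\mu' = \int_{\Omega \times \Omega'} f \, d(\mu\times\mu') = 1$ by Tonelli's theorem; in particular $f_Y(y) < +\infty$ for almost every $y$. The assumption $S(X,Y) \in \R$ means $\int_{\Omega\times\Omega'} f |\ln f| \, d(\mu\times\mu') < +\infty$, so by Tonelli again $\int_\Omega f(x,y)|\ln f(x,y)| \, d\mu(x) < +\infty$ for almost every $y$; the assumption $S(Y) \in \R$ gives $f_Y(y)|\ln f_Y(y)| < +\infty$ for almost every $y$. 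Fix any $y \in \supp f_Y$ for which all three finiteness statements hold, a set of full $\mu'$-measure.

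Next I would verify that $S(X|Y=y)$ exists for such $y$ and establish \eqref{relS0}. Where $f(x,y) > 0$ one has $f(x|y) = f(x,y)/f_Y(y) > 0$, so $\ln f(x|y) = \ln f(x,y) - \ln f_Y(y)$, while both sides vanish off the support by the convention $0\ln 0 = 0$. The triangle inequality then bounds $\int_\Omega f(x|y)|\ln f(x|y)| \, d\mu(x)$ by $\frac{1}{f_Y(y)}\big(\int_\Omega f(x,y)|\ln f(x,y)| \, d\mu(x) + |\ln f_Y(y)| f_Y(y)\big) < +\infty$, so $S(X|Y=y)\in\R$. Multiplying the identity $f(x|y)\ln f(x|y) = \frac{1}{f_Y(y)} f(x,y)(\ln f(x,y) - \ln f_Y(y))$ by $f_Y(y)$, integrating in $x$, and using $\int_\Omega f(x,y)\, d\mu(x) = f_Y(y)$ to evaluate the $\ln f_Y(y)$ term yields exactly \eqref{relS0}.

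Finally I would integrate \eqref{relS0} against $d\mu'(y)$ over $\supp f_Y$. The term $\int_{\supp f_Y} f_Y \ln f_Y \, d\mu'$ equals $S(Y)$, since $f_Y\ln f_Y$ vanishes off $\supp f_Y$. For the remaining term, Fubini's theorem (applicable because $f|\ln f|$ is integrable over $\Omega\times\Omega'$) gives $\int_{\supp f_Y}\int_\Omega f(x,y)\ln f(x,y)\,d\mu(x)\,d\mu'(y) = S(X,Y)$, using that for $y\notin\supp f_Y$ one has $f(\cdot,y) = 0$ $\mu$-almost everywhere, so the inner integral vanishes there. Rearranging the resulting equality produces \eqref{relS}. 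The main obstacle throughout is not the computation but ensuring these a.e.\ finiteness and integrability conditions, so that splitting the logarithm and applying Fubini are legitimate; the hypotheses $S(X,Y),S(Y)\in\R$ are precisely what make this possible.
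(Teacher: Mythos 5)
Your proof is correct, and it is essentially the argument the paper intends: the paper leaves this proposition as ``not hard to check,'' citing (8.33) of \cite{CT06}, and the standard proof there is exactly your computation --- split $\ln f(x|y) = \ln f(x,y) - \ln f_Y(y)$ on $\supp f(\cdot,y)$, integrate in $x$ to get \eqref{relS0}, then integrate in $y$ via Fubini--Tonelli to get \eqref{relS}. Your attention to the a.e.\ finiteness of $f_Y$ and to the absolute integrability guaranteed by the convention $S(X,Y), S(Y) \in \R$ (which justifies both the splitting of the logarithm and the interchange of integrals) is precisely the bookkeeping that makes the claim rigorous.
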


Assuming also $S(X) \in \R$, since $s \mapsto s \ln{s}$ is strictly convex on $[0,+\infty)$, by Jensen's inequality, we have
\begin{align*}
&\int_{\supp f_Y} S(X|Y = y) f_Y(y) d\mu'(y)  = \int_\Omega \int_{\supp f_Y} f(x|y) \ln{f(x|y)}  f_Y(y) d\mu'(y) d\mu(x) \\
\ge & \, \int_\Omega \left(\int_{\supp f_Y} f(x|y)  f_Y(y) d\mu'(y) \right) \ln  \left(\int_{\supp f_Y} f(x|y)  f_Y(y) d\mu'(y) \right)   d\mu(x)
= S(X).
\end{align*}

This proves the subadditivity of the differential entropy.

\begin{proposition}[Corollary 8.6.2 of \cite{CT06}] \label{psub}
Suppose $S(X,Y), S(X), S(Y) \in \R$. Then we have 
\[
S(X,Y) \ge S(X) + S(Y),
\]
where the equality holds if and only if $X$ and $Y$ are independent.
\end{proposition}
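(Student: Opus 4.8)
The plan is to prove the subadditivity of the entropy, namely $S(X,Y) \ge S(X) + S(Y)$ under the standing assumption $S(X,Y), S(X), S(Y) \in \R$, directly from the chain rule established in Proposition \ref{prel}. The starting point is the identity \eqref{relS}, which rewrites $S(X,Y)$ as $S(Y)$ plus the averaged conditional entropy $\int_{\supp f_Y} S(X|Y = y)\, f_Y(y)\, d\mu'(y)$. Thus the entire claim reduces to showing that this averaged conditional entropy dominates the marginal entropy $S(X)$; the $S(Y)$ term then simply transfers to the right-hand side, yielding the desired inequality. In other words, the heart of the matter is the single estimate
\begin{align*}
\int_{\supp f_Y} S(X|Y = y)\, f_Y(y)\, d\mu'(y) \ge S(X).
\end{align*}

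To establish this estimate, I would first unfold the definitions. Using \eqref{dce} and \eqref{defen}, $S(X|Y=y) = \int_\Omega f(x|y) \ln f(x|y)\, d\mu(x)$, so after multiplying by $f_Y(y)$ and integrating, I swap the order of integration (by Tonelli, after splitting into positive and negative parts, which is legitimate since all the relevant entropies are assumed finite) to write the left-hand side as
\begin{align*}
\int_\Omega \left( \int_{\supp f_Y} f(x|y) \ln f(x|y)\, f_Y(y)\, d\mu'(y) \right) d\mu(x).
\end{align*}
For each fixed $x$, I then apply Jensen's inequality to the strictly convex function $s \mapsto s \ln s$ on $[0,+\infty)$, using the inner integration in $y$ against the probability-type weight $f_Y(y)\, d\mu'(y)$. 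This pushes the $\ln$ outside and lowers the value, and the averaged argument $\int_{\supp f_Y} f(x|y)\, f_Y(y)\, d\mu'(y)$ collapses by \eqref{dcd} and \eqref{dmd} to the marginal density $f_X(x) = \int_{\Omega'} f(x,y)\, d\mu'(y)$. Recognizing the resulting expression $\int_\Omega f_X(x) \ln f_X(x)\, d\mu(x)$ as $S(X)$ completes the inequality; this is exactly the displayed computation preceding the proposition statement.

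For the equality case, I would trace through where the single application of Jensen's inequality can be tight. Strict convexity of $s \mapsto s \ln s$ forces equality in Jensen's inequality (for $\mu$-almost every $x$) precisely when the integrand $f(x|y)$ is constant in $y$ on the support, i.e. when $f(x|y)$ does not depend on $y$. Since $f(x|y) = f(x,y)/f_Y(y)$ and the averaged value equals $f_X(x)$, this says $f(x,y) = f_X(x) f_Y(y)$ almost everywhere, which is exactly the statement that $X$ and $Y$ are independent. I would note that the assumption $S(X) \in \R$ (together with the finiteness of $S(X,Y)$ and $S(Y)$) is what guarantees every quantity above is a genuine real number rather than an indeterminate $\infty - \infty$, so that the reductions and the equality analysis are valid.

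The main obstacle is not the inequality itself, which is a clean one-line Jensen argument, but the integrability bookkeeping: justifying the interchange of the order of integration and ensuring that none of the intermediate quantities degenerate to $\pm\infty$. This is handled precisely by the three finiteness hypotheses $S(X,Y), S(X), S(Y) \in \R$, which via Proposition \ref{prel} guarantee $S(X|Y=y) \in \R$ for almost every $y$ and make the averaged conditional entropy a well-defined real number. With that caveat dispatched, the proof is the short chain-rule-plus-Jensen computation already sketched in the text.
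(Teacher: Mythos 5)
Your proposal is correct and follows essentially the same route as the paper: the chain rule of Proposition \ref{prel} reduces everything to showing that the averaged conditional entropy dominates $S(X)$, which is then the displayed Fubini-plus-Jensen computation (for the strictly convex $s \mapsto s\ln s$) appearing right before the proposition. Your treatment of the equality case via strict convexity, forcing $f(x,y) = f_X(x)f_Y(y)$ almost everywhere, is a correct supplement to what the paper leaves implicit (it only cites \cite{CT06} for that part), and your integrability bookkeeping is consistent with the paper's convention that finite entropy means absolute integrability.
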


\subsubsection{Pushforward measure and entropy}

Given two measure spaces $(\Omega, \S, \mu)$ and $(M, \M, \nu)$ with a measurable map $p: \Omega \to M$, we consider the pushforward measure $p_\# (f d\mu)$. Now we assume further that $p_\# (f d\mu) \ll \nu$, that is, there exists a density function $f_{(p)}$ such that
\begin{align}\label{rnp}
p_\# (f d\mu) = f_{(p)} d\nu.
\end{align}
Then from the definition of pushforward, for every (bounded) measurable function $\phi$ on $(M, \M, \nu)$, we have
\begin{align}\label{pfe}
\int_\Omega \phi(p(x)) f(x) d\mu(x) = \int_M \phi(z) f_{(p)}(z) d\nu(z).
\end{align}
In particular, choosing $\phi \equiv 1$, it is clear that $\int_M f_{(p)}d\nu = \int_\Omega f d\mu = 1$. Then we can define the  {\it pushforward entropy under $p$} by $S(f_{(p)})$ if it exists (that is, $S(f_{(p)}) \in \R$). 

\begin{remark}\label{exist}
We don't know whether $p_\# (f d\mu) \ll \nu$ in the general case, nor the existence of $S(f_{(p)})$. However, we will prove them for our applications on corank $1$ Carnot groups. See Lemmas \ref{dep} and \ref{dep2} below.
\end{remark}

\begin{remark}\label{rv}
If $X \sim f$ on $(\Omega, \S, \mu)$ and  $p$ is a measurable map from $(\Omega, \S, \mu)$ to $(M, \M, \nu)$ such that \eqref{rnp} holds, we can check directly that $p(X) \sim f_{(p)}$. As a result, using the notation before, we will use $S(p(X))$ to denote the pushforward entropy $S(f_{(p)})$ as well.
\end{remark}

We now prove a consistency result for pushforward and conditional entropy.

\begin{proposition}\label{pcon}
Assume $(X,Y) \sim f$ on $(\Omega \times \Omega', \S \times \S', \mu \times \mu')$ and $p$ is a measurable map from $(\Omega, \S, \mu)$ to $(M, \M, \nu)$. Let the map $\bar{p}$ be a measurable map from $(\Omega \times \Omega', \S \times \S', \mu \times \mu')$ to $(M \times \Omega', \M \times \S', \nu \times \mu')$ defined by $\bar{p}(x,y) = (p(x),y)$. Assume that $\bar{p}_\# (f d\mu d\mu') \ll \nu \times \mu'$ and
\[
\bar{p}_\# (f d\mu d\mu') = f_{(\bar{p})} d\nu d\mu'.
\]
Then for almost every $y \in \supp f_Y$, we have $p_\# (f(\cdot|y) d\mu) \ll \nu$ and 
\[
p_\# (f(\cdot|y) d\mu) = f_{(\bar{p})}(\cdot|y) d\nu.
\]
In other words, for almost every $y \in \supp f_Y$, we have $f(\cdot|y)_{(p)} = f_{(\bar{p})}(\cdot|y)$.
\end{proposition}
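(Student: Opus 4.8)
The plan is to reduce the statement to an identity between two probability measures on $(M,\M,\nu)$ for almost every fixed $y$, and to verify that identity by testing against functions, using only the definition of the pushforward \eqref{pfe} and the disintegration formula \eqref{dcd} for conditional densities.

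First I would compare the two $Y$-marginals. Since $\bar{p}(x,y)=(p(x),y)$ commutes with the projections onto $\Omega'$, testing \eqref{pfe} (applied to $\bar{p}$ on $\Omega\times\Omega'$) against test functions of the form $(z,y)\mapsto g(y)$ with $g$ bounded measurable gives
\[
\int_{\Omega'} g(y)\,(f_{(\bar{p})})_Y(y)\,d\mu'(y)=\int_{\Omega'} g(y)\,f_Y(y)\,d\mu'(y),
\]
where $(f_{(\bar{p})})_Y$ is the $Y$-marginal density of $f_{(\bar{p})}$ defined as in \eqref{dmd}. As $g$ is arbitrary, $(f_{(\bar{p})})_Y=f_Y$ for $\mu'$-a.e. $y$. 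In particular $f_{(\bar{p})}(\cdot|y)$ is a well-defined conditional density (with total mass $1$) for a.e. $y$ on $\supp f_Y$, which is exactly where the claim must be proved.

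Next, for bounded measurable $\phi$ on $M$ and bounded measurable $g$ on $\Omega'$, I would apply \eqref{pfe} to the product test function $(z,y)\mapsto\phi(z)g(y)$, obtaining
\[
\int_{\Omega'} g(y)\Big(\int_M \phi(z) f_{(\bar{p})}(z,y)\,d\nu(z)\Big)d\mu'(y)=\int_{\Omega'} g(y)\Big(\int_\Omega \phi(p(x)) f(x,y)\,d\mu(x)\Big)d\mu'(y),
\]
the splitting and the measurability of the inner integrals in $y$ being justified by Tonelli's theorem. Since $g$ is arbitrary, the inner integrals agree for $\mu'$-a.e. $y$; dividing by $f_Y(y)=(f_{(\bar{p})})_Y(y)>0$ on $\supp f_Y$ and using \eqref{dcd} yields
\[
\int_M \phi(z) f_{(\bar{p})}(z|y)\,d\nu(z)=\int_\Omega \phi(p(x)) f(x|y)\,d\mu(x),
\]
whose right-hand side is, again by \eqref{pfe} now applied on $\Omega$, the integral of $\phi$ against $p_\#(f(\cdot|y)\,d\mu)$.

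The main obstacle is that the exceptional $\mu'$-null set in the previous step a priori depends on $\phi$, so one cannot immediately conclude a measure identity valid for a single null set of $y$. I would resolve this in the standard way: fix a countable $\pi$-system $\{A_k\}$ generating $\M$ and exhausting $M$ by sets of finite $\nu$-measure (available since the spaces in our applications on $\H(d,\aa)$ are standard and $\sigma$-finite), apply the displayed scalar identity with $\phi=\chi_{A_k}$ for each $k$, and take the union of the countably many null sets. For every remaining $y\in\supp f_Y$, the two probability measures $p_\#(f(\cdot|y)\,d\mu)$ and $f_{(\bar{p})}(\cdot|y)\,d\nu$ then agree on $\{A_k\}$ and hence coincide on all of $\M$ by the uniqueness part of Dynkin's $\pi$--$\lambda$ theorem. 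This yields both the absolute continuity $p_\#(f(\cdot|y)\,d\mu)\ll\nu$ and the identification $f(\cdot|y)_{(p)}=f_{(\bar{p})}(\cdot|y)$ for a.e. $y\in\supp f_Y$, as claimed.
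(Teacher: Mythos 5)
Your proof is correct and follows essentially the same route as the paper: test the pushforward identity against product functions $\phi(z)\psi(y)$, deduce $(f_{(\bar{p})})_Y=f_Y$ a.e.\ by taking $\phi\equiv 1$, then divide by $f_Y(y)>0$ on $\supp f_Y$ to obtain the conditional-density identity. The one place you go beyond the paper is the final step: the paper fixes $\phi$, obtains a $\phi$-dependent null set, and stops there, whereas you explicitly upgrade the a.e.\ scalar identities to a single measure identity valid off one null set, via a countable $\pi$-system and Dynkin's theorem; this closes a gap the paper's own proof leaves implicit (at the mild cost of assuming $\M$ countably generated and $\nu$ $\sigma$-finite, which holds in all of the paper's applications).
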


\begin{proof}
It follows from our assumption that for every measurable functions $\phi$ and $\psi$ on $(M, \M, \nu)$ and $(\Omega', \S', \mu')$ respectively, we have
\[
\int_\Omega \int_{\Omega'} f(x,y) \phi(p(x)) \psi(y) d\mu(x) d\mu'(y) = \int_M  \int_{\Omega'}  f_{(\bar{p})}(z,y) \phi(z) \psi(y) d\nu(z) d\mu'(y).
\]
Since $\psi$ is arbitrary, it follows that for almost every $y$,
\begin{align}\label{inter}
\int_\Omega  f(x,y) \phi(p(x))  d\mu(x) = \int_M   f_{(\bar{p})}(z,y) \phi(z)  d\nu(z).
\end{align}
Choosing $\phi \equiv 1$, we obtain that $f_Y(y) = (f_{(\bar{p})})_Y(y) < +\infty$ holds for almost every $y$. Then dividing both side of \eqref{inter} by $f_Y(y)$ for such $y \in \supp f_Y$, it deduces that for almost every $y \in \supp f_Y$
\[
\int_\Omega  f(x|y) \phi(p(x))  d\mu(x) = \int_M   \frac{f_{(\bar{p})}(z,y)}{(f_{(\bar{p})})_Y(y)} \phi(z)  d\nu(z) = \int_M   f_{(\bar{p})}(z|y) \phi(z)  d\nu(z),
\]
which proves the proposition.
\end{proof}

\begin{remark}
In fact, under some mild conditions, Proposition \ref{pcon} allows us to write $S(p(X)|Y = y)$ without ambiguities. More precisely, it can be interpreted either as the pushforward entropy of $X$ given $Y = y$ under $p$, or as the conditional entropy of $p(X)$ given $Y = y$.
\end{remark}

%\subsubsection{Euclidean case}

%In this paper we will mainly focus on the case when $\Omega$ is a Euclidean space and $\mu$ is the corresponding Lebesgue measure. Here we collect some basic properties of entropy on Euclidean spaces.

%\begin{proposition}[Theorems 8.6.3 and 8.6.4 of \cite{CT06}]\label{pei}
%If $S(X) \in \R$, then we have
%\begin{enumerate}[(i)]
%\item $S(X + c) = S(X)$; 

%\item $S(aX) = S(X) - \ln{|a|}, \quad \forall \, a \ne 0$; 

%\item $S(AX) = S(X) - \ln{|\det A|},  \quad \forall \, A$ invertible.
%\end{enumerate}
%\end{proposition}

\subsection{The duality result in \cite{CC09}}\label{ss23}

The main tool of this article is \cite[Theorem 2.1]{CC09}, which states the duality of the Brascamp--Lieb inequality and the subadditivity of the entropy. Although it is not written explicitly in \cite{CC09}, we should be careful about the case where the entropies are not finite since otherwise some unexpected operation such as $-\infty + \infty$ will appear in the proof.

As a result, for the sake of rigor and completeness, and also for the modification on corank $1$ Carnot group (see Theorem \ref{cr1} below), we state \cite[Theorem 2.1]{CC09} again and include a complete proof here. 

\begin{theorem}[Theorem 2.1 of \cite{CC09}]\label{tcc09}
Let $(\Omega, \S, \mu)$ be a measure space, $m \ge 1$, and for $1 \le j \le m$, $(M_j,\M_j,\nu_j)$ be a measure space together with a measurable map $p_j$ from $\Omega$ to $M_j$. Fix $D \in \R$ and $c_j > 0$, $1 \le j \le m$. 
\begin{enumerate}[(i)]
\item If for any $m$ non-negative measurable functions $f_j : M_j \to [0, +\infty)$, $1 \le j \le m$, we have
\begin{align}\label{gbl}
\int_\Omega \prod_{j = 1}^m f_j(p_j(x)) d\mu(x) \le e^D \prod_{j = 1}^m \left( \int_{M_j} f_j^{1/c_j}(t) d\nu_j(t)\right)^{c_j},
\end{align}
then the following subadditivity of the entropy holds
\begin{align}\label{sae}
\sum_{j = 1}^m c_j S(f_{(p_j)}) \le S(f) + D
\end{align}
for all probability density $f$ belonging to the following set
\begin{align}\label{defw}
\W := \{f : \,  S(f) \in \R, \mbox{ and } \forall \, 1 \le j \le m, (p_j)_\#(f d\mu) =  f_{(p_j)} d\nu_j \mbox{ with } S(f_{(p_j)}) \in \R \}.
\end{align}

\item Conversely, if \eqref{sae} holds for all probability density $f \in \W_0 \subset \W$, then \eqref{gbl} holds for all $m$ non-negative functions $f_j : M_j \to [0, +\infty)$, $1 \le j \le m$ satisfying that 
\begin{align}\label{assfj}
\int_{M_j} f_j^{1/c_j} d\nu_j < +\infty, \quad 0 < \int_\Omega \prod_{j = 1}^m f_j \circ p_j  d\mu  < +\infty, \quad \mbox{and} \quad \frac{\prod_{j = 1}^m f_j \circ p_j}{\int_\Omega \prod_{j = 1}^m f_j \circ p_j  d\mu} \in \W_0.
\end{align}
\end{enumerate}
\end{theorem}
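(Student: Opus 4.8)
The plan is to prove both implications by pairing the entropic inequality of Proposition \ref{pei} with the relevant hypothesis, using the pushforward identity \eqref{pfe} as the bridge between integrals over $\Omega$ and integrals over the target spaces $M_j$. The guiding principle is that the generalized Brascamp--Lieb bound \eqref{gbl} is exactly the statement that $\int_\Omega e^\phi\, d\mu \le e^D$ for the test function built out of the marginal densities, while the entropic inequality \eqref{ei} converts such an exponential bound into a linear entropy estimate.

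For part (i), fix a probability density $f \in \W$ and write $g_j := f_{(p_j)}$ for its pushforward densities, which are themselves probability densities with $S(g_j) \in \R$. I would take as test function
\[
\phi(x) := \sum_{j=1}^m c_j \ln g_j(p_j(x)).
\]
First I would observe that $\{x : g_j(p_j(x)) = 0\}$ carries no $f$-mass, since its $f$-measure equals $\int_{\{g_j = 0\}} g_j \, d\nu_j = 0$ by the pushforward relation; hence $\phi > -\infty$ holds $f$-a.e. on $\supp f$. Applying \eqref{pfe} termwise gives $\int_\Omega f \phi \, d\mu = \sum_j c_j \int_{M_j} g_j \ln g_j \, d\nu_j = \sum_j c_j S(g_j)$, which is finite because $f \in \W$. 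On the other hand $e^\phi = \prod_j g_j^{c_j}\circ p_j$, so feeding $f_j := g_j^{c_j}$ into the hypothesis \eqref{gbl} and using $\int_{M_j} g_j \, d\nu_j = 1$ yields $\int_\Omega e^\phi \, d\mu \le e^D < +\infty$. The entropic inequality \eqref{ei} then gives $\sum_j c_j S(g_j) = \int_\Omega f\phi \, d\mu \le S(f) + \ln \int_\Omega e^\phi\, d\mu \le S(f) + D$, which is exactly \eqref{sae}.

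For part (ii), given functions $f_j$ obeying \eqref{assfj}, I would first normalize: set $g_j := f_j^{1/c_j} / \int_{M_j} f_j^{1/c_j}\, d\nu_j$, so that $g_j$ is a probability density on $M_j$ and $\prod_j f_j \circ p_j = \Big(\prod_j \big(\int_{M_j} f_j^{1/c_j}\, d\nu_j\big)^{c_j}\Big) \prod_j g_j^{c_j}\circ p_j$. This reduces \eqref{gbl} to the single inequality $Z \le e^D$, where $Z := \int_\Omega \prod_j g_j^{c_j}\circ p_j \, d\mu \in (0,+\infty)$. Now let $F := Z^{-1}\prod_j g_j^{c_j}\circ p_j$, which by \eqref{assfj} lies in $\W_0$. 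Writing $\phi := \sum_j c_j \ln g_j \circ p_j$, on $\supp F$ one has $\ln F = \phi - \ln Z$, and integrating against $F$ yields the identity $\int_\Omega F\phi\, d\mu = S(F) + \ln Z$. Next I would bound $\int_\Omega F\phi \, d\mu = \sum_j c_j \int_{M_j} F_{(p_j)} \ln g_j \, d\nu_j$ from above by applying the entropic inequality \eqref{ei} on each $M_j$ to the density $F_{(p_j)}$ with test function $\ln g_j$ (legitimate since $\int_{M_j} g_j\, d\nu_j = 1$), which gives $\int_\Omega F\phi\, d\mu \le \sum_j c_j S(F_{(p_j)})$. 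Combining this with the assumed subadditivity \eqref{sae} for $F \in \W_0$, namely $\sum_j c_j S(F_{(p_j)}) \le S(F) + D$, and with the identity above, the term $S(F)$ cancels and leaves $\ln Z \le D$, as desired.

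The routine parts are the algebraic reduction and the bookkeeping of constants; the step demanding the most care is the measure-theoretic justification that the chosen $\phi$ is finite $f$-a.e. (so that the conventions $0\cdot\infty = 0$ and $0\ln 0 = 0$ do not conceal an illegitimate $-\infty + \infty$) and that the pushforward identity \eqref{pfe}, stated for bounded test functions, extends by truncation to the unbounded integrand $\ln g_j$. This is precisely why the ambient set $\W$ in \eqref{defw} is defined to force finiteness of $S(f)$ and of every $S(f_{(p_j)})$: keeping all entropies real throughout is what makes each application of \eqref{ei}, and the cancellation of $S(F)$ in part (ii), rigorous.
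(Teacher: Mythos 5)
Your proof is correct and takes essentially the same approach as the paper's: part (i) pairs the entropic inequality (Proposition \ref{pei}) with \eqref{gbl} applied to $f_j = f_{(p_j)}^{c_j}$ and converts integrals via the pushforward identity \eqref{pfe}, while part (ii) applies Proposition \ref{pei} on each $M_j$ and then the assumed subadditivity \eqref{sae} to the normalized product density, cancelling its entropy. The only difference is cosmetic: you pre-normalize the $f_j$ into probability densities $g_j$ (which tacitly uses $\int_{M_j} f_j^{1/c_j}\,d\nu_j > 0$, a fact that does follow from \eqref{assfj} since $\prod_j f_j\circ p_j / \int_\Omega \prod_j f_j\circ p_j\,d\mu \in \W_0 \subset \W$ forces each pushforward to be a probability density), whereas the paper carries the terms $\ln \int_{M_j} f_j^{1/c_j}\,d\nu_j$ through the computation.
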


\begin{proof}
We first prove the first assertion. In fact, for $f \in \W$, the function $f_{(p_j)}$ is a non-negative function on $M_j$ with $\int_{M_j} f_{(p_j)} d\nu_j  =1$. As a result, it we choose $f_j = f_{(p_j)}^{c_j}$, then \eqref{gbl} gives
\[
\int_\Omega \prod_{j = 1}^m f_{(p_j)}^{c_j}(p_j(x)) d\mu(x) \le e^D < +\infty.
\]
It follows from the inequality above and Proposition \ref{pei} that 
\begin{align*}
&D + S(f) \ge \ln \left( \int_\Omega \prod_{j = 1}^m f_{(p_j)}^{c_j}(p_j(x)) d\mu(x) \right)  + S(f) 
\ge   \int_\Omega f(x) \ln \left( \prod_{j = 1}^m f_{(p_j)}^{c_j}(p_j(x))  \right) d\mu(x) \\
= & \, \sum_{j = 1}^m c_j  \int_\Omega f(x) \ln f_{(p_j)}(p_j(x)) d\mu(x) 
=  \sum_{j = 1}^m c_j  \int_{M_j} f_{(p_j)}(t) \ln f_{(p_j)}(t) d\nu_j(t) = \sum_{j = 1}^m c_j  S(f_{(p_j)}),
\end{align*}
where we have used \eqref{pfe} in the penultimate ``$=$''. Then we turn to prove the second assertion. In fact, for $f \in \W_0$ and $f_j, 1 \le j \le m$ satisfying \eqref{assfj}, by Proposition \ref{pei} and \eqref{pfe} again we have 
\[
c_j \ln \left( \int_{M_j} f_j^{1/c_j}(t) d\nu_j(t) \right) + c_j S(f_{(p_j)}) \ge \int_{M_j} f_{(p_j)}(t) \ln f_j(t) d\nu_j(t) = \int_\Omega f(x) \ln f_j(p_j(x))  d\mu(x).
\]
Adding $j$ from $1$ to $m$ (which is possible by Remark \ref{mi}), we obtain
\[
\ln \prod_{j = 1}^m \left( \int_{M_j} f_j^{1/c_j}(t) d\nu_j(t)\right)^{c_j} + \sum_{j = 1}^m c_j S(f_{(p_j)}) \ge  \int_\Omega f(x) \ln \prod_{j = 1}^m f_j(p_j(x))  d\mu(x)
\]
Then writing $F = \prod_{j = 1}^m f_j \circ p_j$, combining the inequality above with \eqref{sae} we obtain
\begin{align}\label{mcc1}
D + \ln \prod_{j = 1}^m \left( \int_{M_j} f_j^{1/c_j}(t) d\nu_j(t)\right)^{c_j} + S(f) \ge \int_\Omega f(x) \ln F(x)  d\mu(x).
\end{align}
From assumption we can choose $f = F/ \int_\Omega F d\mu \in \W_0$. By a direct computation, we have
\begin{align}\label{mcc2}
\int_\Omega f(x) \ln F(x)  d\mu(x) - S(f) = \ln \left( \int_\Omega F(x) d\mu(x)\right).
\end{align}
Combining \eqref{mcc1} with \eqref{mcc2}, we prove \eqref{gbl} under \eqref{sae} and \eqref{assfj}.
\end{proof}

\begin{remark}
Comparing to \cite[Theorem 2.1]{CC09}, we added the extra set $\W_0$ for the sake of the modification for corank $1$ Carnot groups in Theorem \ref{cr1} below. In fact, for our application, we do not need to establish \eqref{sae} for the whole $f \in \W$ but only on a suitable subset $\W_0$. 
\end{remark}

\begin{remark}\label{forre}
Without further assumptions on the space, it is hard for us to use an approximation process to remove the restriction \eqref{assfj} since we do not know whether the set $\W$ is large enough.
\end{remark}

Fortunenately, on Euclidean spaces $(\R^k, \B_k, \m_k)$, the following two lemmas show that the set
\begin{align}\label{defDk}
 \D_k := \{ f: \, \mbox{$f \ge 0$, $f$, as well as $\supp f$, is bounded} \}
\end{align}
behaves well under Euclidean projections as well as nonlinear projections defined in \eqref{pro2} and \eqref{pro3}. Here $k \in \N^*$, $\B_k$ and $\m_k$ denote the corresponding Borel $\sigma$-algebra and Lebesgue measure on $\R^k$. %When the dimension is not clear, we will not write down the dependence on $k$.

\begin{lemma}\label{dep}
Let $k,k' \in \N^*$ and $k' < k$. Assume $f \in \D_k$ and $\P$ is a projection from $\R^k$ onto $\R^{k'}$ by deleting some coordinates of $\R^k$. Then we have $\P_\# (f d\m_k) \ll \m_{k'}$ and $f_{(\P)} \in \D_{k'}$, where $\P_\# (f d\m_k) = f_{(\P)} d\m_{k'}$.
\end{lemma}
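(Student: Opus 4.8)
The plan is to show that projecting by deleting coordinates maps $\D_k$ into $\D_{k'}$, establishing both the absolute continuity and the boundedness claims simultaneously via an explicit formula for the pushforward density.

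First I would recall the concrete form of $\P$: since $\P$ deletes some fixed subset of coordinates, after relabelling I may assume without loss of generality that $\R^k \cong \R^{k'} \times \R^{k - k'}$ with $\P(z,w) = z$ for $z \in \R^{k'}$, $w \in \R^{k - k'}$. Then for any bounded measurable test function $\phi$ on $\R^{k'}$, Fubini's theorem gives
\[
\int_{\R^k} \phi(\P(z,w)) f(z,w) \, d\m_k(z,w) = \int_{\R^{k'}} \phi(z) \left( \int_{\R^{k - k'}} f(z,w) \, d\m_{k - k'}(w) \right) d\m_{k'}(z),
\]
which by \eqref{pfe} identifies the pushforward density as the partial integral
\[
f_{(\P)}(z) = \int_{\R^{k - k'}} f(z,w) \, d\m_{k - k'}(w).
\]
In particular $\P_\#(f d\m_k) = f_{(\P)} d\m_{k'} \ll \m_{k'}$, giving the absolute continuity claim.

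It then remains to verify $f_{(\P)} \in \D_{k'}$, i.e.\ that $f_{(\P)}$ is non-negative, bounded, with bounded support. Non-negativity is immediate from $f \ge 0$. For the support, note that $\supp f_{(\P)} \subset \P(\supp f)$, so boundedness of $\supp f$ forces $\supp f_{(\P)}$ to be bounded; more precisely, if $\supp f$ is contained in a box $[-R,R]^k$, then $f_{(\P)}$ vanishes outside $[-R,R]^{k'}$. For boundedness of $f_{(\P)}$ itself, I would use that $f \le C$ for some constant $C$ and that the $w$-integration runs over the bounded set where $f(z,\cdot)$ is nonzero, which is contained in $[-R,R]^{k-k'}$; hence
\[
f_{(\P)}(z) \le C \cdot \m_{k - k'}\big([-R,R]^{k - k'}\big) = C (2R)^{k - k'} < +\infty,
\]
uniformly in $z$. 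This shows $f_{(\P)}$ is bounded with bounded support, so $f_{(\P)} \in \D_{k'}$.

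I do not anticipate a genuine obstacle here, since everything reduces to Fubini's theorem together with the fact that both the height bound $f \le C$ and the support bound survive partial integration over a finite-measure fibre. The only point requiring mild care is the bookkeeping that deleting an arbitrary subset of coordinates is, up to a permutation of coordinates (which preserves $\m_k$ and maps $\D_k$ to itself), the same as projecting off the last $k - k'$ coordinates; once this reduction is made explicit, the estimate above applies verbatim.
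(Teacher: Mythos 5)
Your proposal is correct and follows essentially the same route as the paper: reduce to deleting the last $k-k'$ coordinates, apply Fubini to identify $f_{(\P)}(x') = \int_{\R^{k-k'}} f(x',x'')\,dx''$, and read off absolute continuity and membership in $\D_{k'}$. The only difference is that you spell out the boundedness and support estimates that the paper dismisses as clear, which is a harmless (and slightly more careful) elaboration.
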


\begin{proof}
Without loss of generality we can assume that $\P$ is given by deleting the last $k - k'$ coordinates. In the following we write an element $x$ in $\R^k$ as $(x',x'')$ with $x' \in \R^{k'}$ and $x'' \in \R^{k - k'}$. By definition we have $\P(x) = x'$. Then for every measurable function $\phi$ on $\R^{k'}$ we have
\[
\int_{\R^k} f(x) \phi(\P(x)) dx = \int_{\R^{k'}}\int_{\R^{k - k'}} f(x) \phi(x') dx'dx''
=  \int_{\R^{k'}} \left(\int_{\R^{k - k'}} f(x',x'') dx''  \right)  \phi(x') dx',
\]
which implies $\P_\# (f d\m_k) \ll \m_{k'}$ and 
\[
f_{(\P)}(x') = \int_{\R^{k - k'}} f(x',x'') dx''.
\]
It is clear that $f_{(\P)} \in \D_{k'}$ if $f \in \D_k$.
\end{proof}

\begin{lemma}\label{dep2}
On corank $1$ Carnot group $\H(d,\aa)$, assume $f \in \D_{d + 2n + 1}$ and $d + 1 \le j \le d + 2n$. Then we have $(\pi_j)_\# (f d\m_{d + 2n + 1}) \ll \m_{d + 2n}$ and $f_{(\pi_j)} \in \D_{d + 2n}$, where $(\pi_j)_\# (f d\m_{d + 2n + 1}) = f_{(\pi_j)}
 d\m_{d + 2n}$.
\end{lemma}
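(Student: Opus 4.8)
The plan is to follow the strategy of Lemma~\ref{dep}: test the pushforward measure against an arbitrary bounded measurable $\phi$ on $\R^{d+2n}$, perform a change of variables to bring the integral into the form $\int \phi \cdot (\text{density}) \, d\m_{d+2n}$, and read off $f_{(\pi_j)}$ explicitly via \eqref{pfe}. Since $d+1 \le j \le d+2n$, we have $j = d+2i-1$ or $j = d+2i$ for a unique $i \in \{1,\ldots,n\}$; I treat $j = d+2i-1$, the case $j = d+2i$ being entirely symmetric with the sign of the quadratic term reversed as in \eqref{pro2}--\eqref{pro3}. Write the deleted coordinate as $a := x_{d+2i-1}$, set $b := x_{d+2i}$, and let $w := \h{x}_{d+2i-1} \in \R^{d+2n-1}$ collect the retained $x$-coordinates, so that $b$ is one component of $w$. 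By \eqref{pro2}, $\pi_{d+2i-1}(x,t) = (w,\, t + \tfrac{\alpha_i}{2} a b)$.

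The key step is a volume-preserving shear in the fiber. For each fixed $w$ (hence fixed $b$) and each fixed $a$, substitute $s = t + \tfrac{\alpha_i}{2} a b$, whose Jacobian in $t$ is $1$. Because $f \in \D_{d+2n+1}$ has bounded support and $\phi$ is bounded, Fubini's theorem applies and gives
\begin{align*}
&\int_{\R^{d+2n+1}} f(x,t)\, \phi(\pi_{d+2i-1}(x,t)) \, dx\, dt \\
&\qquad = \int_{\R^{d+2n-1}} \int_{\R} \left( \int_{\R} f\big(w \ast a,\, s - \tfrac{\alpha_i}{2} a b\big) \, da \right) \phi(w,s) \, ds \, dw,
\end{align*}
where $w \ast a \in \R^{d+2n}$ denotes the point obtained by inserting $a$ into the $(d+2i-1)$-th slot of $w$. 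This exhibits $(\pi_{d+2i-1})_\#(f \, d\m_{d+2n+1})$ as absolutely continuous with respect to $\m_{d+2n}$, so \eqref{rnp} holds with density
\[
f_{(\pi_{d+2i-1})}(w,s) = \int_{\R} f\big(w \ast a,\, s - \tfrac{\alpha_i}{2} a b\big) \, da.
\]

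It then remains to verify the three conditions in \eqref{defDk}, i.e.\ that $f_{(\pi_{d+2i-1})} \in \D_{d+2n}$. Non-negativity is immediate. For boundedness, fix $M$ and $\rho$ with $f \le M$ and $\supp f \subset \{|(x,t)| \le \rho\}$; for fixed $(w,s)$ the $a$-integrand vanishes unless $(w \ast a,\, s - \tfrac{\alpha_i}{2}ab) \in \supp f$, which confines $a$ to an interval of length at most $2\rho$, whence the density is $\le 2 M \rho$. For bounded support, if $(w,s) \in \supp f_{(\pi_{d+2i-1})}$ then some $a$ places $(w\ast a,\, s - \tfrac{\alpha_i}{2}ab)$ in $\supp f$; hence $w$, $a$, $b$, and the $t$-coordinate $s - \tfrac{\alpha_i}{2}ab$ are all bounded by $\rho$, so $s = (s - \tfrac{\alpha_i}{2}ab) + \tfrac{\alpha_i}{2}ab$ is bounded as well.

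I expect no serious obstacle: the lemma is the nonlinear analogue of Lemma~\ref{dep}, and the shear has unit Jacobian. The only genuine subtlety, absent in the purely coordinate-deleting case, is that the new $t$-coordinate $s$ mixes the deleted coordinate $a$ with the surviving coordinate $b$. This is precisely what necessitates the two supplementary checks above --- the confinement of the $a$-fiber and the boundedness of $s$ --- but both are harmless because $\supp f$ is bounded.
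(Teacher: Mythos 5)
Your proposal is correct and follows essentially the same route as the paper's proof: a unit-Jacobian shear $s = t + \tfrac{\alpha_i}{2}ab$ in the $t$-variable, Fubini to identify the explicit density $f_{(\pi_j)}$ as a one-dimensional fiber integral, and then the same two support-based checks (confinement of the integration variable to a bounded interval for boundedness, and boundedness of $s$ via $s = (s - \tfrac{\alpha_i}{2}ab) + \tfrac{\alpha_i}{2}ab$ for bounded support). The only difference is presentational: you carry out the general case $d \in \N$, $d+1 \le j \le d+2n$ directly, whereas the paper writes out only $d=0$, $j=1$ and declares the remaining cases similar.
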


\begin{proof}
For the ease of the notation we only prove the special case $d = 0$ and $ j = 1$ and the proof for the case $d \in \N^*$ and the other projections are similar. In the following we write an element $x$ in $\R^{2n}$ as $(x_1,\h{x}_1)$ with $x_1 \in \R$ and $\h{x}_1 \in \R^{2n - 1}$. As a result, $\pi_1(x,t) = (\h{x}_1, t + \frac{\a_1}{2} x_1 x_2)$. Then for every measurable function $\phi$ on $\R^{2n}$ we have
\begin{align*}
&\int_{\R^{2n}} \int_{\R} f(x,t) \phi(\pi_1(x,t)) dxdt \\
=& \, \int_\R  \int_{\R^{2n - 1}} \int_\R f(x,t)\phi\left(\h{x}_1, t + \frac{\a_1}{2} x_1 x_2\right) dx_1d\h{x}_1dt \\
=& \,  \int_\R  \int_{\R^{2n - 1}} \int_\R f\left(x, t - \frac{\a_1}{2} x_1 x_2\right) \phi(\h{x}_1, t) dx_1d\h{x}_1dt \\
=& \, \int_\R  \int_{\R^{2n - 1}} \left[ \int_\R f\left(x_1, \h{x}_1, t - \frac{\a_1}{2} x_1 x_2\right)  dx_1 \right] \phi(\h{x}_1, t)d\h{x}_1dt.
\end{align*}
It follows that $(\pi_1)_\# (f d\m_{2n + 1}) \ll \m_{2n}$ and 
\begin{align}\label{expfpi}
f_{(\pi_1)}(\h{x}_1,t) = \int_\R f\left(x_1, \h{x}_1, t - \frac{\a_1}{2} x_1 x_2\right)  dx_1.
\end{align}
Now we are in a position to prove $f_{(\pi_1)} \in \D_{2n}$. Since $f \in \D_{2n + 1}$, we can assume that $|f| \le M$ and $\supp f \subset [-L,L]^{2n + 1}$ for some $M, L > 0$. Then the integral in \eqref{expfpi} is actually on $[-L,L]$ and we have $|f_{(\pi_1)} | \le 2LM$, which implies $f_{(\pi_1)}$ is bounded. Furthermore, if $|x_j| > L$ for some $2 \le j \le 2n$, then the integrand in \eqref{expfpi} is $0$ and thus $f_{(\pi_1)}$ is also $0$. This proves $\supp f_{(\pi_1)} \subset [-L,L]^{2n - 1} \times \R$. If $|t| > L + \frac{\a_1}{2} L^2$ and $|x_2| \le L$, then for any $x_1 \in [-L,L]$, 
\[
\left| t - \frac{\a_1}{2} x_1 x_2 \right| > L + \frac{\a_1}{2} L^2 - \frac{\a_1}{2} L^2 = L,
\]
which implies the integrand in \eqref{expfpi} is $0$ again. As a result, we proved $\supp f_{(\pi_1)} \subset [-L,L]^{2n - 1} \times \left[- L - \frac{\a_1}{2} L^2,L + \frac{\a_1}{2} L^2\right]$. In conclusion, we have $f_{(\pi_1)} \in \D_{2n}$.
\end{proof}

Now we focus on our corank $1$ Carnot group $\H(d,\aa)$ case. Combining Lemmas \ref{dep} and \ref{dep2} we obtain the following corollary.

\begin{corollary}\label{cW}
On corank $1$ Carnot group $\H(d,\aa)$ with projections $\{\pi_j\}_{j = 1}^{d + 2n}$, we have 
\begin{align}\label{defDP}
\DP_{d + 2n + 1} := \{f \in \D_{d + 2n + 1}: \, \mbox{$f$ is a probability density}\} \subset \W.
\end{align}
\end{corollary}

Now we state Theorem \ref{tcc09} on $\H(d,\aa)$ in a more concise way, that is, without the annoying restriction \eqref{assfj}. 

\begin{theorem} \label{cr1}
On corank $1$ Carnot group $\H(d,\aa)$ with projections $\{\pi_j\}_{j = 1}^{d + 2n}$, fix $D \in \R$ and $c_j > 0$, $1 \le j \le d + 2n$. Then the following two assertions are equivalent: 
\begin{enumerate}[(1)]
\item For any $d + 2n$ non-negative measurable functions $f_j : \R^{d + 2n} \to [0, +\infty)$, $1 \le j \le d + 2n$, we have
\begin{align}\label{gbl2}
\int_{\H(d,\aa)} \prod_{j = 1}^{d + 2n} f_j ( \pi_j(x,t)) dxdt \le e^D \prod_{j = 1}^{d + 2n} \left( \int_{\R^{d + 2n}} f_j^{1/c_j}(\h{x}_j,t) d\h{x}_j dt \right)^{c_j}.
\end{align}

\item For every $f \in \DP_{d + 2n + 1}$, the following subadditivity of the entropy holds
\begin{align}\label{sae2}
\sum_{j = 1}^m c_j S(f_{(\pi_j)}) \le S(f) + D.
\end{align}

\end{enumerate}
\end{theorem}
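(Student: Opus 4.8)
The plan is to recognize Theorem \ref{cr1} as the specialization of the duality Theorem \ref{tcc09} to the data $\Omega = \H(d,\aa) \cong \R^{d + 2n + 1}$, $m = d + 2n$, $M_j = \R^{d + 2n}$, $p_j = \pi_j$, with the given $c_j$, and then to upgrade it by stripping away the auxiliary restriction \eqref{assfj}. The implication (1) $\Rightarrow$ (2) is essentially immediate: by Corollary \ref{cW} we have $\DP_{d + 2n + 1} \subset \W$, so every $f \in \DP_{d + 2n + 1}$ is an admissible density for part (i) of Theorem \ref{tcc09}. Since \eqref{gbl2} is precisely \eqref{gbl} under the above identification and is assumed to hold for all non-negative $f_j$, part (i) delivers \eqref{sae}, that is \eqref{sae2}, for every $f \in \W$, and a fortiori for every $f \in \DP_{d + 2n + 1}$.

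For (2) $\Rightarrow$ (1), I would first apply part (ii) of Theorem \ref{tcc09} with the choice $\W_0 = \DP_{d + 2n + 1}$ (legitimate since $\DP_{d+2n+1}\subset\W$). This yields \eqref{gbl2} for all $f_j$ satisfying \eqref{assfj}, i.e. $\int f_j^{1/c_j} < +\infty$, $0 < \int_{\H(d,\aa)} F \, dxdt < +\infty$, and $F / \int F \in \DP_{d + 2n + 1}$, where $F := \prod_{j = 1}^{d + 2n} f_j \circ \pi_j$. The remaining work is to remove these constraints by an exhaustion argument, exploiting that $\DP_{d+2n+1}$ is rich enough on $\H(d,\aa)$ even though Remark \ref{forre} warns this can fail abstractly. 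Given arbitrary non-negative measurable $f_j$, I would discard the trivial case in which some $\int f_j^{1/c_j} = +\infty$ (the right-hand side of \eqref{gbl2} is then infinite), and in the remaining case truncate $f_j^{(N)} := \min(f_j, N)\, \chi_{B_N}$, with $B_N$ an increasing exhaustion of $\R^{d + 2n}$ by bounded sets, so that $f_j^{(N)} \in \D_{d + 2n}$ and $f_j^{(N)} \uparrow f_j$ pointwise. Writing $F^{(N)} := \prod_j f_j^{(N)} \circ \pi_j$, one has $F^{(N)} \uparrow F$, and whenever $\int F^{(N)} > 0$ the normalization $F^{(N)} / \int F^{(N)}$ lies in $\DP_{d + 2n + 1}$, so that \eqref{assfj} is met and \eqref{gbl2} applies to the family $\{f_j^{(N)}\}$ (the case $\int F^{(N)} = 0$ being trivial). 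Letting $N \to \infty$ and invoking monotone convergence on both sides, namely $\int F^{(N)} \uparrow \int F$ and $\int (f_j^{(N)})^{1/c_j} \uparrow \int f_j^{1/c_j}$, recovers \eqref{gbl2} in full generality.

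The step I expect to demand the most care — and the one that makes this approximation valid on $\H(d,\aa)$ — is verifying that $F^{(N)}$ has bounded support, which is exactly what places $F^{(N)} / \int F^{(N)}$ in $\DP_{d + 2n + 1}$. This is where the explicit formulas \eqref{pro2}--\eqref{pro3} and Lemmas \ref{dep}--\ref{dep2} enter. Since each coordinate $x_i$, $1 \le i \le d + 2n$, is retained as a first-block coordinate by every projection except $\pi_i$, and $d + 2n \ge 2$, the bounded supports of the $f_j^{(N)}$ force all $x_i$ to be bounded on $\supp F^{(N)}$. For the central variable $t$, if $d \ge 1$ the Euclidean projection $\pi_1(x,t) = (\h{x}_1, t)$ bounds $t$ directly; if $d = 0$ one pairs $\pi_1$ and $\pi_2$, whose $t$-components $t \pm \frac{\a_1}{2} x_1 x_2$ are each bounded on the support and sum to $2t$, whence $t$ is bounded. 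Thus $F^{(N)}$ is bounded with bounded support, so it indeed lies in $\D_{d + 2n + 1}$, and after normalization in $\DP_{d + 2n + 1}$, which closes the argument.
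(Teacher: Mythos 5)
Your proposal is correct and takes essentially the same route as the paper: (1)$\Rightarrow$(2) via part (i) of Theorem \ref{tcc09} together with Corollary \ref{cW}, and (2)$\Rightarrow$(1) via part (ii) with $\W_0 = \DP_{d+2n+1}$, where the key point is exactly the one you isolate, namely that $F = \prod_j f_j\circ\pi_j$ with $f_j \in \D_{d+2n}$ has bounded support (the paper bounds $t$ using the pair $\pi_{d+1},\pi_{d+2}$ in all cases, while you use the Euclidean projection when $d\ge 1$; both work), followed by the same truncation-and-monotone-convergence step.
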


\begin{proof}
(1) $\Rightarrow$ (2): It just follows from (i) of Theorem \ref{tcc09} and Corollary \ref{cW}. \\
(2) $\Rightarrow$ (1): We claim that if $f_j \in \D_{d + 2n}$ for $1 \le j \le d + 2n$, then $F = \prod_{j = 1}^{d + 2n} f_j \circ \pi_j \in \D_{d + 2n + 1}$.  In fact, it is easy to see that $F$ is a bounded function. Now assume that $L > 0$ is a large number such that $\cup_{j = 1}^{d + 2n} \supp f_j \subset [-L, L]^{d + 2n}$. If $(x,t) \in \supp F$, then for every $1 \le j \le d + 2n$, $f_j(\pi_j(x,t)) > 0$. This implies $|x_j| \le L$ for every $1 \le j \le d + 2n$. Furthermore, since $n \ge 1$ we must have $f_{d + 1}(\pi_{d + 1}(x,t)) > 0$ and $f_{d + 2}(\pi_{d + 2}(x,t)) > 0$. Thus 
we obtain
\[
\left| t + \frac{\a_1}{2} x_{d + 1} x_{d + 2}\right| \le L , \qquad 
\left| t - \frac{\a_1}{2} x_{d + 1} x_{d + 2}\right| \le L,
\]
which implies $|t| \le L$ since
\[
2t^2 \le \left| t + \frac{\a_1}{2} x_{d + 1} x_{d + 2}\right|^2 + \left| t - \frac{\a_1}{2} x_{d + 1} x_{d + 2}\right|^2 \le 2L^2.
\]
This proves the claim. As a result,  if $\int F > 0$, then we have $F/\int F \in \DP_{d + 2n + 1}$. Thus, \eqref{assfj} holds for $\W_0 = \DP_{d + 2n + 1}$. In this case  \eqref{gbl2} holds by (ii) of Theorem \ref{tcc09}. In the opposite case $\int F = 0$,  \eqref{gbl2} holds automatically.

In conclusion, we proved that when  $f_j \in \D_{d + 2n}$ for $1 \le j \le d + 2n$, \eqref{gbl2} holds. The general case can be obtained by first applying \eqref{gbl2} to the truncated functions $f_j \chi_{\{x: \, |x| \le k, f_j(x) \le k\}}, 1 \le j \le d + 2n$ and then letting $k \to +\infty$.

\end{proof}

\begin{remark}\label{finite}
Using Theorem \ref{cr1}, we reduce the proof of the main theorem to the subadditivity of the entropy on a relatively simple set. Due to Lemmas \ref{dep} and \ref{dep2}, as well as Proposition \ref{prel}, all the entropies appearing in the proof below are finite (or at least finite almost everywhere when there is an integral).
\end{remark}

\begin{remark}\label{prod}
Actually with similar arguments we can prove that Theorem \ref{cr1} is also valid on products of corank $1$ Carnot groups.%, or products of Euclidean spaces and corank $1$ Carnot groups.
\end{remark}

Now we are prepared to prove our main theorem.

\section{Proof of Theorem \ref{t1}}\label{s3}
\setcounter{equation}{0}

\subsection{Proof for the special case $d = 0$}\label{ss31}

We first consider the special case $d = 0$. General case $d \in \N^*$ will be treated in next subsection where we will see how the subadditivity of the entropy behaves under taking product spaces.

From Theorem \ref{cr1}, it suffices to show that for every $f \in \DP_{2n + 1}$, the following subadditivity of the entropy holds
\begin{align}\label{target}
 \sum_{j =  1}^{ 2n}  \frac{n + 1}{n( 2n + 1)} S(f_{(\pi_j)}) \le S(f) + \ln \CC(0,\aa).
\end{align}

To begin with, by a change of variable $t \mapsto \alpha t$ and applying Theorem \ref{tH1} to functions $f_1(\cdot, \alpha \, \cdot)$ , $f_2(\cdot, \alpha \, \cdot)$,  we obtain that for every $\a > 0$, 
\begin{align}\label{LWH1p}
\int_{\R^3} f_1\left(y, t + \frac{\a}{2} xy\right) f_2\left(x, t - \frac{\a}{2}xy\right)  dxdydt \le  \frac{\|\RR\|_{\frac{3}{2} \to 3}}{\a^{\frac{1}{3}}} \|f_1\|_{\frac{3}{2}} \|f_2\|_{\frac{3}{2}},
\end{align}
which by (i) of Theorem \ref{tcc09} implies
\begin{align}\label{sea3}
\frac{2}{3} S\left(Y_2, W + \frac{\a}{2} Y_1 Y_2 \right) + \frac{2}{3} S\left(Y_1, W - \frac{\a}{2} Y_1 Y_2 \right) \le S(Y_1, Y_2, W) + \CC_0 - \frac{1}{3} \ln{\a}
\end{align}
holds for all the entropies appearing above finite, where $\CC_0 := \ln\left( \|\RR\|_{\frac{3}{2} \to 3} \right)$.

In the following for $1 \le i < j \le  2n$, we use $\P_{i,j}$ to denote the projection by deleting the $i$-th and $j$-th coordinates. Furthermore, for the ease of the notation we assume $(X,T) \sim f \in \DP_{2n + 1}$ with $X = (X_1, \ldots, X_{2n})$. From Remark \ref{finite} we know that all the entropies appearing in the proof are finite so we do not need to care about the finiteness problem.

Now for every $1 \le j \le n$, by Proposition \ref{prel} we have
\begin{align*}
S(\pi_{2j - 1}(X,T)) = S(X^{j}) + \int_{\supp f_{X^{j}}} S\left(X_{2j}, T + \frac{\a_j}{2} X_{2j - 1}X_{2j} \Big| X^{j} = y\right)  f_{X^{j}}(y) dy, \\
S(\pi_{2j }(X,T)) = S(X^{j}) + \int_{\supp f_{X^{j}}} S\left(X_{2j - 1}, T - \frac{\a_j}{2} X_{2j - 1}X_{2j} \Big| X^{j} = y\right)  f_{X^{j}}(y) dy,
\end{align*}
where $X^{j} := \P_{2j - 1, 2j}(X)$. Adding the two equations above and noticing that for almost every $y \in \supp f_{X^{j}}$ we deduce from \eqref{sea3} as well as Proposition \ref{pcon} that 
\begin{align*}
&S\left(X_{2j}, T + \frac{\a_j}{2} X_{2j - 1}X_{2j} \Big| X^{j} = y\right)  +  S\left(X_{2j - 1}, T - \frac{\a_j}{2} X_{2j - 1}X_{2j} \Big| X^{j} = y\right)  \\
\le & \, \frac{3}{2} \left( S\left(X_{2j - 1}, X_{2j}, T  \Big| X^{j} = y\right) + \CC_0 \right) - \frac{1}{2} \ln{\a_j}.
\end{align*}
Thus we can obtain the following
\begin{align}\nonumber
&S(\pi_{2j - 1}(X,T)) + S(\pi_{2j }(X,T))  \\
\nonumber
\le & \,2 S(X^{j}) + \frac{3}{2} \int_{\supp f_{X^{j}}} S\left(X_{2j - 1}, X_{2j}, T  \Big| X^{j} = y\right) f_{X^{j}}(y) dy + \frac{3}{2} \CC_0 - \frac{1}{2} \ln{\a_j}\\
\label{main1}
= & \frac{1}{2 }S(X^{j}) + \frac{3}{2} S(X,T) +  \frac{3}{2} \CC_0 - \frac{1}{2} \ln{\a_j},
\end{align}
where in the last ``$=$'' we have used Proposition \ref{prel} again. Adding \eqref{main1} from $j  = 1$ to $n$ yields
\begin{align}\label{main2}
\sum_{j = 1}^{2n} S(\pi_{j}(X,T)) \le \frac{1}{2} \sum_{j = 1}^n S(\P_{2j - 1, 2j}(X)) + \frac{3n}{2} S(X,T) + \frac{3n}{2} \CC_0 - \sum_{j = 1}^n\frac{1}{2} \ln{\a_j}.
\end{align}

Noticing that 
\[
\P_{2j - 1, 2j}\P_{2j - 1, 2j}^* =  \id_{2n - 2}, \qquad  \frac{1}{n - 1}\sum_{j = 1}^n \P_{2j - 1, 2j}^* \P_{2j - 1, 2j} = \id_{2n},
\] 
it follows from Theorem \ref{tfBL} (the part for the geometric Brascamp--Lieb inequality), together with (i) of Theorem \ref{tcc09} that 
\begin{align}\label{main3}
 \sum_{j = 1}^n S(\P_{2j - 1, 2j}(X)) \le ( n - 1) S(X).
\end{align}

Finally, from Proposition \ref{pWL} (with $d = 0$), together with (i) of Theorem \ref{tcc09} again, we obtain
\begin{align}\label{main4}
\sum_{j = 1}^{2n + 1} S(\pi_j(X,T)) \le 2n  S(X,T).
\end{align}

Recalling that $\pi_{2n + 1}(X,T) = X$ by definition (see \eqref{pro1}), combining \eqref{main2}-\eqref{main4}, we obtain that 
\begin{align*}
&\sum_{j = 1}^{2n} S(\pi_{j}(X,T)) \le \frac{1}{2} \sum_{j = 1}^n S(\P_{2j - 1, 2j}(X)) + \frac{3n}{2} S(X,T) + \frac{3n}{2} \CC_0 - \sum_{j = 1}^n\frac{1}{2} \ln{\a_j} \\
\le & \, \frac{n - 1}{2}  S(X) + \frac{3n}{2} S(X,T) + \frac{3n}{2} \CC_0 - \sum_{j = 1}^n\frac{1}{2} \ln{\a_j} \\
\le & \, \frac{n - 1}{2} \left[ 2n S(X,T) - \sum_{j = 1}^{2n} S(\pi_{j}(X,T)) \right]  + \frac{3n}{2} S(X,T) + \frac{3n}{2} \CC_0 - \sum_{j = 1}^n\frac{1}{2} \ln{\a_j},
\end{align*}
which implies \eqref{target} with the constant $\ln \CC(0,\aa) = \frac{3}{2n + 1} \CC_0 - \frac{\sum_{j = 1}^n \ln{\a_j}}{n(2n + 1)}$.

\subsection{Proof for general case $d \in \N^*$}

If we could establish the following two propositions, then our Theorem \ref{t1} follows by applying Proposition \ref{ppro} to $\Omega = \R^d$ (with \eqref{LWf}) and $\Omega' = \H(0,\aa)$ for $d \ge 2$
and Proposition \ref{ppro2} to $\Omega = \H(0,\aa)$ for $d = 1$
since $\H(d,\aa) \cong \R^d \times \H(0,\aa)$.

\begin{proposition}\label{ppro}
Assume $(\Omega, \S, \mu)$ is either a Euclidean space whose dimension is greater than $2$ or a corank $1$ Carnot group and $\{p_j\}_{j = 1}^m$ are corresponding $\{\P_j\}_{j = 1}^m$ or $\{\pi_j\}_{j = 1}^m$. The same assumption goes to $(\Omega', \S', \mu')$ with $\{p_\ell'\}_{\ell = 1}^{m'}$. Furthermore, suppose there exist $c_j > 0 \, ( 1 \le j \le m)$ and $c_\ell' > 0 \, ( 1 \le \ell \le m')$ and $D, D' \in \R$ such that 
\begin{align}\label{gbl4}
\int_\Omega \prod_{j = 1}^m f_j(p_j(x)) d\mu(x) \le e^D \prod_{j = 1}^m \|f_j\|_{1/c_j}, \\
\label{gbl3}
\int_{\Omega'} \prod_{\ell = 1}^{m'} g_\ell(p_\ell'(y)) d\mu'(y) \le e^{D'} \prod_{\ell = 1}^{m'} \|g_\ell\|_{1/{c_\ell'}},
\end{align} 
hold for all non-negative measurable functions $f_1, \ldots, f_m, g_1, \ldots, g_{m'}$. 
Then on the product space  $(\Omega \times \Omega', \S \times \S', \mu \times \mu')$ we have 
\[
\int_{\Omega \times \Omega'} \prod_{j = 1}^{m + m'} f_j(\bar{p}_j(x,y)) d\mu(x)d\mu'(y) \le e^{\bar{D}}  \prod_{j = 1}^{m + m'} \|f_j\|_{1/{\bar{c}_j}}
\]
for all non-negative measurable functions $f_1, \ldots, f_{m + m'}$, where \[\bar{D} := \frac{ (\sum_{\ell = 1}^{m'} c_\ell' - 1)D + (\sum_{j = 1}^m c_j - 1)D'}{\left(\sum_{j = 1}^m c_j\right)\left(\sum_{\ell = 1}^{m'} c_\ell'\right) - 1},\]
\begin{align}\label{defbp}
\bar{p}_j(x,y)  := \begin{cases}
(p_j(x),y) & \mbox{if $1 \le j \le m$} \\
(x,p_{j -m}'(y)) & \mbox{if $m+1 \le j \le m + m'$}
\end{cases},
\end{align}
and
\[ 
\bar{c}_j := \begin{cases}
\frac{c_j (\sum_{\ell = 1}^{m'} c_\ell' - 1)}{\left(\sum_{j = 1}^m c_j\right)\left(\sum_{\ell = 1}^{m'} c_\ell'\right) - 1} & \mbox{if $1 \le j \le m$} \\
\frac{c_{j - m}' (\sum_{j = 1}^m c_j - 1)}{\left(\sum_{j = 1}^m c_j\right)\left(\sum_{\ell = 1}^{m'} c_\ell'\right) - 1} & \mbox{if $m+1 \le j \le m + m'$}
\end{cases}.
\] 
\end{proposition}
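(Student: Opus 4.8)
The plan is to pass to the dual formulation in terms of subadditivity of the entropy, prove the product inequality at the entropy level by a conditioning argument, and then dualize back. First I would use the equivalence of Theorem \ref{cr1} together with its product-space extension recorded in Remark \ref{prod} (for a Euclidean factor one invokes Theorem \ref{tcc09} with the analogue of Corollary \ref{cW} supplied by Lemma \ref{dep}) to rewrite the hypotheses \eqref{gbl4} and \eqref{gbl3} as entropy inequalities: for all admissible probability densities $f$ on $\Omega$ and $g$ on $\Omega'$,
\[
\sum_{j=1}^m c_j S(f_{(p_j)}) \le S(f) + D, \qquad \sum_{\ell=1}^{m'} c_\ell' S(g_{(p_\ell')}) \le S(g) + D'.
\]
By the same equivalence it then suffices to establish the product subadditivity $\sum_{j=1}^{m+m'} \bar c_j\, S(h_{(\bar p_j)}) \le S(h) + \bar D$ for every admissible probability density $h$ on $\Omega \times \Omega'$.

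Fix such an $h$ and let $(X,Y) \sim h$. The core step treats the two families of projections asymmetrically. For $1 \le j \le m$ we have $\bar p_j(X,Y) = (p_j(X),Y)$; conditioning on $Y = y$, Proposition \ref{pcon} identifies the conditional pushforward entropy with the pushforward of the conditional density $h(\cdot|y)$, so applying the $\Omega$-inequality fibrewise gives $\sum_{j=1}^m c_j\, S(p_j(X)|Y=y) \le S(X|Y=y) + D$. Integrating against $h_Y(y)$ and applying the chain rule of Proposition \ref{prel} (both to $(p_j(X),Y)$ and to $(X,Y)$) collapses the fibre integrals into $S(\bar p_j(X,Y))$ and $S(h)$, yielding, with $C := \sum_{j=1}^m c_j$,
\[
\sum_{j=1}^m c_j\, S(\bar p_j(X,Y)) \le S(h) + (C-1)\,S(Y) + D.
\]
Conditioning instead on $X$ and repeating the computation with the $\Omega'$-inequality produces, with $C' := \sum_{\ell=1}^{m'} c_\ell'$,
\[
\sum_{\ell=1}^{m'} c_\ell'\, S(\bar p_{m+\ell}(X,Y)) \le S(h) + (C'-1)\,S(X) + D'.
\]

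Next I would add $\lambda$ times the first of these to $\mu$ times the second, where $\lambda := \frac{C'-1}{CC'-1}$ and $\mu := \frac{C-1}{CC'-1}$; these weights are chosen precisely so that $\lambda c_j = \bar c_j$ for $1 \le j \le m$, $\mu c_\ell' = \bar c_{m+\ell}$ for $1 \le \ell \le m'$, and $\lambda D + \mu D' = \bar D$. The coefficients of the unwanted marginal terms $S(X)$ and $S(Y)$ are then both equal to $\gamma := \frac{(C-1)(C'-1)}{CC'-1} > 0$, and here the ordinary subadditivity of Proposition \ref{psub}, in the form $S(X)+S(Y) \le S(X,Y) = S(h)$, lets me absorb them into $S(h)$. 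A short arithmetic check shows that the resulting coefficient of $S(h)$ equals $(\lambda+\mu) + \gamma = \frac{C+C'-2}{CC'-1} + \frac{(C-1)(C'-1)}{CC'-1} = 1$, so the product subadditivity holds with constant $\bar D$, and dualizing back via Remark \ref{prod} finishes the proof.

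I expect the only genuine obstacle to be the bookkeeping of finiteness and admissibility rather than the algebra: one must ensure that for almost every $y$ the conditional density $h(\cdot|y)$ lies in the class on which the $\Omega$-inequality was established, symmetrically for $h(\cdot|x)$ on $\Omega'$, and that every entropy and pushforward entropy entering the chain-rule manipulations is finite. This is exactly what membership in $\DP$ together with Lemmas \ref{dep} and \ref{dep2} guarantees—boundedness and bounded support are inherited both by the conditional densities on fibres and by the marginals—so each application of Propositions \ref{prel}, \ref{pcon} and \ref{psub} is legitimate and no indeterminate $-\infty + \infty$ can arise.
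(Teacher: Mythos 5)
Your proposal is correct and follows essentially the same route as the paper's own proof: both reduce to entropy subadditivity via Theorem \ref{cr1} and Remark \ref{prod}, derive the two fibrewise inequalities $\sum_{j=1}^m c_j S(\bar{p}_j(X,Y)) \le S(X,Y) + \left(\sum_{j=1}^m c_j - 1\right)S(Y) + D$ and its symmetric counterpart by conditioning through Propositions \ref{prel} and \ref{pcon}, and then combine them with weights proportional to $\sum_{\ell=1}^{m'} c_\ell' - 1$ and $\sum_{j=1}^m c_j - 1$, absorbing $S(X)+S(Y)$ into $S(X,Y)$ by Proposition \ref{psub}. Your normalized weights $\lambda$ and $\mu$ are exactly the paper's $B/(A+B+AB)$ and $A/(A+B+AB)$, so the algebra and the finiteness bookkeeping coincide with the paper's argument.
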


\begin{proof}
As before, by Theorem \ref{cr1} and Remark \ref{prod}, we only need to prove
\begin{align}\label{targett}
\sum_{j = 1}^{m + m'} \bar{c}_j S(f_{\bar{p}_j}) \le S(f) + \bar{D}, \qquad \forall \, f \in \DP_k,
\end{align}
where $k$ is the topological dimension of $\Omega \times \Omega'$.
Assume $(X,Y) \sim f \in \DP_k$. Then similar to the argument in Subsection \ref{ss31}, by Propositions \ref{prel} and \ref{pcon}, and \eqref{gbl4}, together with (i) of Theorem \ref{tcc09} and Remark \ref{finite}, we obtain  
\begin{align*}
&\sum_{j = 1}^m c_j S(\bar{p}_j(X,Y)) = \sum_{j = 1}^m c_j S(p_j(X),Y) \\
=& \, \sum_{j = 1}^m c_j \left( S(Y) + \int_{\supp f_Y} S(p_j(X)|Y=y) f_Y(y) dy  \right) \\
=& \,  \left(\sum_{j = 1}^m c_j \right) S(Y) + \int_{\supp f_Y} \left( \sum_{j = 1}^m c_j S(p_j(X)|Y=y) \right)f_Y(y) dy \\
\le & \, \left(\sum_{j = 1}^m c_j \right) S(Y) + \int_{\supp f_Y} S(X|Y = y)  f_Y(y) dy + D \\
= & \, \left(\sum_{j = 1}^m c_j - 1 \right) S(Y) + S(X,Y) + D.
\end{align*}
Similarly, we obtain
\[
\sum_{\ell =  1}^{ m'} c_\ell' S(\bar{p}_{m + \ell}(X,Y)) \le \left(\sum_{\ell = 1}^{m'} c_\ell' - 1 \right) S(X) + S(X,Y) + D'.
\]
Writing $A = \sum_{j = 1}^m c_j - 1 $ and $B = \sum_{\ell = 1}^{m'} c_\ell' - 1$ ($A,B > 0$ by dilation invariance, see Remark \ref{coeffr} below), with the two inequalities above we obtain
\begin{align*}
&\sum_{j = 1}^m c_j B S(\bar{p}_j(X,Y)) + \sum_{\ell =  1}^{ m'} c_\ell' A S(\bar{p}_{m + \ell}(X,Y)) \\
\le & \, AB ( S(X) + S(Y))  + (A + B) S(X,Y) + BD + AD' \\ \le & \, (A + B + AB) S(X,Y) + (BD + AD'),
\end{align*}
where we have used the subadditivity of the differential entropy (Proposition \ref{psub}) in the last ``$\le $''. This gives desired \eqref{targett} and thus proves this proposition.
\end{proof}

Similar and simpler argument gives the following proposition as well.

\begin{proposition}\label{ppro2}
Assume $(\Omega, \S, \mu)$ is a corank $1$ Carnot group and $\{p_j\}_{j = 1}^m$ are the corresponding projections.  Furthermore, suppose there exist $c_j > 0 \, ( 1 \le j \le m)$ and $D \in \R$ such that 
\begin{align}\label{gbl42}
\int_\Omega \prod_{j = 1}^m f_j(p_j(y)) d\mu(y) \le e^D \prod_{j = 1}^m \|f_j\|_{1/c_j}, 
\end{align} 
holds for all non-negative measurable functions $f_1, \ldots, f_m$. 
Then on the product space  $(\R \times \Omega, \B_1 \times \S, \m_1 \times \mu)$ we have 
\[
\int_{\R \times \Omega} \prod_{j = 1}^{m + 1} f_j(\bar{p}_j(x,y)) d\m_1(x)d\mu(y) \le e^{\bar{D}}  \prod_{j = 1}^{m + 1} \|f_j\|_{1/{\bar{c}_j}}
\]
for all non-negative measurable functions $f_1, \ldots, f_{m + 1}$, where $\bar{D} := \frac{D}{\sum_{j = 1}^m c_j }$,
\begin{align}\label{defbp2}
\bar{p}_j(x,y)  := \begin{cases}
y & \mbox{if $j  = 1$} \\
(x,p_{j - 1}(y)) & \mbox{if $2 \le j \le m + 1$}
\end{cases},
\end{align}
and
\[ 
\bar{c}_j := \begin{cases}
\frac{\sum_{j = 1}^m c_j - 1}{\sum_{j = 1}^m c_j} & \mbox{if $j = 1$} \\
\frac{c_{j - 1} }{\sum_{j = 1}^m c_j} & \mbox{if $2 \le j \le m + 1$}
\end{cases}.
\] 

\end{proposition}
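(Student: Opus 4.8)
The plan is to prove Proposition \ref{ppro2} by the same entropic route used for Proposition \ref{ppro}, only simpler because the first factor is just $\R$ with the identity-type projection. By Theorem \ref{cr1} and Remark \ref{prod}, it suffices to establish the subadditivity of the entropy
\begin{align}\label{plantarget}
\sum_{j = 1}^{m + 1} \bar{c}_j S(f_{(\bar{p}_j)}) \le S(f) + \bar{D}, \qquad \forall \, f \in \DP_{k + 1},
\end{align}
where $k$ is the topological dimension of $\Omega$, so that $\R \times \Omega$ has dimension $k + 1$. First I would take $(X,Y) \sim f \in \DP_{k + 1}$ with $X$ the $\R$-component and $Y$ the $\Omega$-component, and read off the two types of projected entropies: since $\bar{p}_1(x,y) = y$ we get $S(f_{(\bar{p}_1)}) = S(Y)$ directly, while for $2 \le j \le m + 1$ the projection $\bar{p}_j(x,y) = (x, p_{j - 1}(y))$ leaves the $\R$-coordinate untouched and applies $p_{j - 1}$ only to $Y$.

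The key computation is to bound $\sum_{j = 2}^{m + 1} c_{j - 1} S(\bar{p}_j(X,Y))$. Following the $\Omega$-factor estimate in the proof of Proposition \ref{ppro}, I would condition on $X = x$ using Proposition \ref{prel}, interchange the sum with the conditional integral, and apply the hypothesis \eqref{gbl42} fiberwise via (i) of Theorem \ref{tcc09} together with Proposition \ref{pcon}, which legitimizes writing $S(p_{j - 1}(Y)|X = x)$ unambiguously. Concretely this yields
\begin{align*}
\sum_{j = 2}^{m + 1} c_{j - 1} S(\bar{p}_j(X,Y)) &= \Big(\sum_{j = 1}^m c_j\Big) S(X) + \int_{\supp f_X} \Big( \sum_{j = 1}^m c_j S(p_j(Y)|X = x) \Big) f_X(x) \, d\m_1(x) \\
&\le \Big(\sum_{j = 1}^m c_j\Big) S(X) + \int_{\supp f_X} S(Y|X = x) f_X(x)\, d\m_1(x) + D \\
&= \Big(\sum_{j = 1}^m c_j - 1\Big) S(X) + S(X,Y) + D,
\end{align*}
where the last equality is again Proposition \ref{prel}. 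All entropies here are finite by Remark \ref{finite}.

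Writing $A := \sum_{j = 1}^m c_j - 1 > 0$ (positive by dilation invariance as in Remark \ref{coeffr}), the remaining step is to combine this with $S(f_{(\bar{p}_1)}) = S(Y)$ and eliminate the auxiliary terms $S(X)$ and $S(Y)$ using the subadditivity $S(X,Y) \ge S(X) + S(Y)$ of Proposition \ref{psub}. Multiplying the displayed inequality through by the normalizing factor $1/\big(A + 1\big) = 1/\sum_{j = 1}^m c_j$ and adding $\bar{c}_1 S(Y) = \frac{A}{A + 1} S(Y)$, I would check that the coefficients $\bar{c}_j$ and constant $\bar{D} = D/\sum_{j = 1}^m c_j$ are exactly those needed so that the $S(X)$ and $S(Y)$ contributions collapse into a single multiple of $S(X,Y)$ bounded by $S(X,Y)$ itself, giving \eqref{plantarget}. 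The only point requiring care — and the main (mild) obstacle — is the bookkeeping that guarantees $A > 0$ and that the normalization produces precisely the stated $\bar{c}_j$; this is purely a matter of matching the scaling/dilation constraint rather than any genuine difficulty, since the entropic machinery has already removed all the finiteness and measurability concerns.
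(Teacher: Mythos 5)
Your proof is correct and is essentially the argument the paper intends: the paper omits an explicit proof of Proposition \ref{ppro2}, stating only that a "similar and simpler argument" to that of Proposition \ref{ppro} works, and your write-up is exactly that argument carried out — reduction to entropy subadditivity via Theorem \ref{cr1} and Remark \ref{prod}, conditioning on the $\R$-variable with Propositions \ref{prel} and \ref{pcon}, applying \eqref{gbl42} fiberwise through (i) of Theorem \ref{tcc09}, and closing with Proposition \ref{psub}. The bookkeeping you flag indeed checks out: with $A = \sum_j c_j - 1 > 0$, dividing your key inequality by $A+1$ and adding $\frac{A}{A+1}S(Y)$ yields $\frac{A}{A+1}(S(X)+S(Y)) + \frac{1}{A+1}S(X,Y) + \frac{D}{A+1} \le S(X,Y) + \bar{D}$, matching the stated $\bar{c}_j$ and $\bar{D}$.
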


\begin{remark}\label{coeffr}
In fact, for Euclidean spaces or corank $1$ Carnot groups (without the last projection), by dilation invariance, we automatically have 
\[
\sum_{j = 1}^m c_j = \frac{Q}{Q - 1}, \qquad \sum_{\ell = 1}^{m'} c_\ell' = \frac{Q'}{Q' - 1},
\]
where $Q$ and $Q'$ are homogeneous dimensions of $(\Omega, \S, \mu)$ and $(\Omega', \S', \mu')$ respectively. This simplifies the constants in Proposition \ref{ppro} by
\[ 
\bar{D} := \frac{ D(Q - 1) + D'(Q'-1)}{Q + Q' - 1},
\qquad 
\bar{c}_j := \begin{cases}
\frac{c_j (Q - 1)}{Q + Q' - 1} & \mbox{if $1 \le j \le m$} \\
\frac{c_{j - m}' (Q' - 1)}{Q + Q' - 1} & \mbox{if $m+1 \le j \le m + m'$}
\end{cases}
\] 
and the constants in Proposition \ref{ppro2} by
\[
\bar{D} := \frac{ D(Q - 1) }{Q },
\qquad 
\bar{c}_j := \begin{cases}
\frac{1}{Q } & \mbox{if $j = 1$} \\
\frac{c_{j - 1} (Q - 1)}{Q } & \mbox{if $2 \le j \le m + 1$}
\end{cases}.
\]
However, for the sake of possible extensions to general cases, we decided to state Propositions \ref{ppro} and \ref{ppro2} in the current way.
\end{remark}

\section{Applications and generalizations}\label{s4}
\setcounter{equation}{0}

\subsection{Applications to Gagliardo--Nirenberg--Sobolev inequalities and isoperimetric inequalities}

In this subsection we use our Loomis--Whitney inequality (cf. \eqref{LWco1}) to deduce the Gagliardo--Nirenberg--Sobolev inequality as well as the isoperimetric inequality on corank $1$ Carnot groups. In fact, it follows from a quite standard argument and we just give a brief proof here. For more details for the proof as well as the history, we refer to \cite{CDPT07, FP22, S02} and the references therein.  

We begin by stating a direct corollary of Theorem \ref{t1}, which is the Loomis--Whitney inequality for the sets. See also Remark \ref{rset} for the case of Euclidean spaces.
\begin{corollary}\label{c1}
On corank $1$ Carnot group $\H(d,\aa)$, it holds that
\begin{align}\label{LWco12}
\m_{d + 2n + 1}(E) \le \CC(d,\aa) \prod_{j = 1}^d \m_{d + 2n}(\pi_j(E))^{\frac{1}{d + 2n + 1}} \prod_{j = d + 1}^{d + 2n} \m_{d + 2n}(\pi_j(E))^{\frac{n + 1}{n(d + 2n + 1)}},
\end{align}
for all measurable set $E$, with the constant $\CC(d,\aa)$ defined in \eqref{defCC}.
\end{corollary}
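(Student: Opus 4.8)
The plan is to derive Corollary \ref{c1} directly from the functional Loomis--Whitney inequality \eqref{LWco1} in Theorem \ref{t1} by the same set-to-function reduction already recorded in Remark \ref{rset} for the Euclidean case. The key observation is that the indicator $\chi_E$ of a measurable set $E \subset \H(d,\aa)$ is dominated by a product of indicators of its projections. Precisely, for each $j$, if $(x,t) \in E$, then $\pi_j(x,t) \in \pi_j(E)$, so $\chi_E(x,t) \le \chi_{\pi_j(E)}(\pi_j(x,t))$; since this holds for every $j$ simultaneously, we get $\chi_E(x,t) \le \prod_{j=1}^{d+2n} \chi_{\pi_j(E)}(\pi_j(x,t))$, and because all factors are $\{0,1\}$-valued we may freely insert any positive exponents on them without weakening the bound.

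First I would set $f_j := \chi_{\pi_j(E)}$ for $1 \le j \le d+2n$ and substitute into \eqref{LWco1}. The left-hand side is then bounded below by $\int_{\H(d,\aa)} \chi_E(x,t)\, dxdt = \m_{d+2n+1}(E)$ using the pointwise domination above. For the right-hand side I would compute each norm: since $\chi_{\pi_j(E)}^p = \chi_{\pi_j(E)}$ for any $p>0$, we have $\|f_j\|_p = \m_{d+2n}(\pi_j(E))^{1/p}$. Reading off the exponents from \eqref{LWco1}, the first $d$ terms carry $p = d+2n+1$, giving $\m_{d+2n}(\pi_j(E))^{\frac{1}{d+2n+1}}$, and the remaining $2n$ terms carry $p = \frac{n(d+2n+1)}{n+1}$, giving $\m_{d+2n}(\pi_j(E))^{\frac{n+1}{n(d+2n+1)}}$. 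Collecting these factors and the constant $\CC(d,\aa)$ yields exactly \eqref{LWco12}.

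There is essentially no hard part here: the argument is the standard specialization of a functional inequality to indicator functions, entirely parallel to the reasoning in Remark \ref{rset}. The only points requiring a word of care are that the projections $\pi_j(E)$ are measurable (so that the $f_j$ are legitimate inputs to \eqref{LWco1}) and that one handles the trivial case where some $\m_{d+2n}(\pi_j(E)) = 0$ or $\m_{d+2n+1}(E) = +\infty$; in the former case $E$ is contained in a null set under $\pi_j^{-1}$ and both sides behave consistently, so the inequality holds. I would therefore state the proof briefly, noting that it follows from Theorem \ref{t1} by choosing $f_j = \chi_{\pi_j(E)}$ and the elementary domination $\chi_E \le \prod_{j=1}^{d+2n} \chi_{\pi_j(E)} \circ \pi_j$, exactly as in the classical Euclidean reduction.
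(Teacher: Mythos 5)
Your proposal is correct and follows essentially the same route as the paper: the paper's proof is exactly the observation that $E \subset \cap_{j=1}^{d+2n} \pi_j^{-1}(\pi_j(E))$, hence $\chi_E \le \prod_{j=1}^{d+2n} \chi_{\pi_j(E)} \circ \pi_j$, followed by substituting $f_j = \chi_{\pi_j(E)}$ into \eqref{LWco1} and reading off $\|\chi_{\pi_j(E)}\|_p = \m_{d+2n}(\pi_j(E))^{1/p}$. Your additional remarks on measurability and degenerate cases go slightly beyond what the paper records, but they do not change the argument.
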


\begin{proof}
Noticing that for every measurable set $E$ we have
\[
E \subset \cap_{j = 1}^{d + 2n} \pi_j^{-1}(\pi_j(E)), \quad \mbox{which implies} \quad 
\chi_E \le \prod_{j = 1}^{d + 2n}  \chi_{\pi_j(E)} \circ \pi_j,
\]
we prove the corollary by  choosing $f_j = \chi_{\pi_j(E)}$ in \eqref{LWco1}. 
\end{proof}

To proceed we need definitions on functions with bounded variation and the perimeter for sets. We use $\F(\H(d,\aa))$ to denote the set of functions $\varphi \in C^1_0(\H(d,\aa), \R^{d + 2n})$ such that $|\varphi| \le 1$, and define {\it variation of a function $f \in L^1(\H(d,\aa))$} by 
\[
\Var_{\H(d,\aa)} (f) := \sup_{\varphi \in \F(\H(d,\aa))} \int_{\H(d,\aa)} f(x,t) \sum_{j = 1}^{d + 2n} \X_j \varphi_j(x,t) dxdt.
\]
Now we use $\BV(\H(d,\aa))$ to denote the space of all functions $f \in L^1(\H(d,\aa))$ with finite variation. It is a Banach space with the following natural norm:
\[
\|f\|_{\BV(\H(d,\aa))} := \|f\|_1 + \Var_{\H(d,\aa)} (f).
\]
Moreover, given a measurable set $E$, we define the {\it perimeter of $E$} by 
\[
\PP_{\H(d,\aa)} (E):= \Var_{\H(d,\aa)} (\chi_E).
\] 
See \cite{CDG94, FSS96} for more details on functions with bounded variation for vector fields (including our case) and also \cite{CDPT07} for the special case of the first Heisenberg group $\H^1$.

\begin{theorem}\label{tgns}
On corank $1$ Carnot group $\H(d,\aa)$, there exists a constant $C > 0$ (depending on $\H(d,\aa)$) such that
\begin{align}\label{tarxx}
\|f\|_{\frac{d + 2n + 2}{d + 2n + 1}} \le C \Var_{\H(d,\aa)}(f), \qquad \forall \, f \in \BV(\H(d,\aa)).
\end{align}
\end{theorem}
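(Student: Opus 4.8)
The plan is to deduce the Gagliardo--Nirenberg--Sobolev inequality \eqref{tarxx} from the geometric Loomis--Whitney inequality for sets (Corollary \ref{c1}) via the standard two-step scheme: first establish the inequality for characteristic functions of sets (the isoperimetric inequality), then upgrade to general $\BV$ functions using the coarea formula. The key numerological check is that the exponent $\frac{d+2n+2}{d+2n+1}$ is exactly the scaling-critical Sobolev exponent $\frac{Q}{Q-1}$ for the homogeneous dimension $Q = d + 2n + 2$ of $\H(d,\aa)$, which is what makes the whole argument dimensionally consistent.

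For the first step I would take $f = \chi_E$ for a set $E$ of finite perimeter and bound each projected measure $\m_{d+2n}(\pi_j(E))$ in terms of the perimeter $\PP_{\H(d,\aa)}(E)$. The crucial geometric input is that the projection $\pi_j$ contracts $(d+2n+1)$-dimensional volume onto a $(d+2n)$-dimensional slice along the direction $\L_j$, so that $\m_{d+2n}(\pi_j(E))$ controls the ``$\X_j$-directional'' part of the variation: heuristically, by the fundamental theorem of calculus along the one-parameter subgroup $\ell \mapsto (x,t)\cdot \ell e_j$, one gets $\m_{d+2n}(\pi_j(E)) \le \int_{\H(d,\aa)} |\X_j \chi_E|$, and summing (or rather multiplying the geometric-mean factors) over $j$ together with Corollary \ref{c1} yields
\begin{align}\label{isoplan}
\m_{d+2n+1}(E)^{\frac{d+2n+1}{d+2n+2}} \le C\, \PP_{\H(d,\aa)}(E).
\end{align}
Here I must be slightly careful: the two distinct exponents in \eqref{LWco12} (the factor $\frac{1}{d+2n+1}$ for the first $d$ commuting directions versus $\frac{n+1}{n(d+2n+1)}$ for the horizontal directions) sum to exactly $\frac{d+2n+1}{d+2n+2}$, so after applying the arithmetic--geometric mean inequality to pass from the product of projected measures to the total perimeter, the homogeneity comes out correctly and the mismatched exponents cause no trouble at the level of the final power.

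For the second step I would pass from sets to functions using the coarea formula for $\BV$ functions relative to the vector fields $\X_1,\dots,\X_{d+2n}$, namely $\Var_{\H(d,\aa)}(f) = \int_{\R} \PP_{\H(d,\aa)}(\{|f| > s\})\, ds$ (valid in this sub-Riemannian setting by \cite{CDG94, FSS96}), combined with the layer-cake/Minkowski-type inequality that upgrades the set-level estimate \eqref{isoplan} to the $L^p$ norm. Concretely, writing $E_s = \{|f| > s\}$ and using that $\|f\|_{p} = \left\| \int_0^\infty \chi_{E_s}\, ds \right\|_p \le \int_0^\infty \|\chi_{E_s}\|_p\, ds$ with $p = \frac{d+2n+2}{d+2n+1}$, I would apply \eqref{isoplan} to each $E_s$ and then the coarea formula to recover $\Var_{\H(d,\aa)}(f)$ on the right-hand side.

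\textbf{The main obstacle} I anticipate is the rigorous justification of the directional bound $\m_{d+2n}(\pi_j(E)) \le \int |\X_j \chi_E|$ for the nonlinear projections $\pi_{d+2j-1}, \pi_{d+2j}$ (those in \eqref{pro2}--\eqref{pro3}), as opposed to the linear Euclidean projections. By Lemma \ref{tri} and Remark \ref{rel}, $\pi_j$ is precisely the quotient by the flow of $\X_j$ (for $1 \le j \le d+2n$), so the fibers of $\pi_j$ are the integral curves of $\X_j$; the one-dimensional fundamental-theorem-of-calculus estimate along these curves is what produces the $\X_j$-derivative, and integrating over the base $\J_j \cong \R^{d+2n}$ via the disintegration from Lemmas \ref{dep} and \ref{dep2} gives the projected measure. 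Making this precise for sets of finite perimeter (where $\chi_E$ is only $\BV$, not smooth) requires the approximation of $\BV$ functions by smooth ones and a careful use of the structure of the perimeter measure; this is where I would invoke the cited theory of \cite{CDG94, FSS96} rather than reprove it, and it is the only genuinely nontrivial point in an otherwise standard deduction.
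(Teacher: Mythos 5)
Your plan is correct in substance, but it runs the implication in the opposite direction to the paper, so the two are worth comparing. The paper never passes through the isoperimetric inequality: it first reduces to $f \in C_0^\infty(\H(d,\aa))$ by the Meyers--Serrin type approximation \cite[Theorem 2.2.2]{FSS96}, slices $f$ into dyadic bands $F_k = \{2^{k-1} \le |f| < 2^k\}$, applies the set-level Loomis--Whitney inequality (Corollary \ref{c1}) to each $F_k$, controls the projected measures by the localized fundamental-theorem-of-calculus estimate \eqref{estFk}, $\m_{d+2n}(\pi_j(F_k)) \le 2^{-k+2}\int_{F_{k-1}}|\X_j f|\,d\m_{d+2n+1}$ (following \cite[Lemma 4.3]{FP22}), and then sums over $k$ with H\"older and an $\ell^p$-embedding; the isoperimetric inequality (Corollary \ref{ciso}) is deduced \emph{afterwards} from \eqref{tarxx}. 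You instead prove set-level isoperimetry first and integrate it via coarea and layer-cake/Minkowski --- the classical Federer--Maz'ya equivalence. What your route buys is transparency: coarea gives the exact identity $\Var_{\H(d,\aa)}(f) = \int_0^\infty \PP_{\H(d,\aa)}(\{|f|>s\})\,ds$, so no dyadic bookkeeping or H\"older-over-scales step is needed. What the paper's ordering buys is that the fiber argument is only ever applied to level sets of smooth functions, so no BV structure theory for general finite-perimeter sets is needed; your step 1 requires the bound $\m_{d+2n}(\pi_j(E)) \le \int|\X_j\chi_E|$ for arbitrary finite-perimeter $E$ (with the projection interpreted in the essential sense, after discarding the null portion of $E$ lying over fibers that meet $E$ in a null set), which you rightly flag as the nontrivial point; it can be sidestepped by performing the smooth approximation \emph{before} slicing, so that only a.e.\ (regular, by Sard) level sets of smooth $f$ enter. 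Two slips, neither fatal: the exponents in \eqref{LWco12} sum to $\frac{d+2n+2}{d+2n+1} = \frac{Q}{Q-1}$, not $\frac{d+2n+1}{d+2n+2}$ as you wrote --- and this is precisely what gives $\m_{d+2n+1}(E) \le C\, \PP_{\H(d,\aa)}(E)^{\frac{Q}{Q-1}}$, i.e.\ your isoperimetric inequality; moreover no arithmetic--geometric mean step is involved, since $\m_{d+2n}(\pi_j(E)) \le |\X_j\chi_E|(\H(d,\aa)) \le \PP_{\H(d,\aa)}(E)$ for each $j$ separately (test the variation with vector fields having a single nonzero component), and one simply multiplies these bounds.
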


\begin{proof}
In fact from an approximation argument (see for example \cite[Theorem 2.2.2]{FSS96}) we only need to prove \eqref{tarxx} for $f \in C_0^\infty(\H(d,\aa))$. Note that in this case by integration by parts the right-hand side of \eqref{tarxx} becomes $\|\nabla f\|_1$. Now for such $f$, $1 \le j \le d + 2n$, and $k \in \Z$, we write
\[
F_k := \{(x,t) \in \H(d,\aa) : \, 2^{k - 1} \le |f(x,t)| < 2^k\}.
\]
Then an argument similar to the one in the proof of \cite[Lemma 4.3]{FP22} gives the estimate
\begin{align}\label{estFk}
\m_{d + 2n}(\pi_j(F_k)) \le 2^{-k + 2} \int_{F_{k - 1}} |\X_j f| d\m_{d + 2n + 1}, \qquad \forall \, j, k.
\end{align}
Then by Corollary \ref{c1} and \eqref{estFk} we have
\begin{align}\nonumber
&\int_{\H(d,\aa)} |f|^{\frac{d + 2n + 2}{d + 2n + 1}}  d\m_{d + 2n + 1} < \sum_{k} 2^{\frac{k(d + 2n + 2)}{d + 2n + 1}} \m_{d + 2n + 1}(F_k) \\
\nonumber
\le & \, \CC(d,\aa)\sum_{k} 2^{\frac{k(d + 2n + 2)}{d + 2n + 1}} \prod_{j = 1}^d \m_{d + 2n}(\pi_j(F_k))^{\frac{1}{d + 2n + 1}} \prod_{j = d + 1}^{d + 2n} \m_{d + 2n}(\pi_j(F_k))^{\frac{n + 1}{n(d + 2n + 1)}} \\
\label{inter3}
\le & \, \CC(d,\aa)  2^{\frac{2 (d + 2n + 2)}{d + 2n + 1}} \sum_k \prod_{j = 1}^{d + 2n} \ar_j(k),   %\sum_{k} \prod_{j = 1}^d \left( \int_{F_{k - 1}} |\X_j f|  d\m_{d + 2n + 1}\right)^{\frac{1}{d + 2n + 1}}  \prod_{j = d + 1}^{d + 2n}  \left( \int_{F_{k - 1}} |\X_j f|  d\m_{d + 2n + 1}\right)^{\frac{n + 1}{n(d + 2n + 1)}}.
\end{align}
where $ (\ar_j(k))_{k \in \Z}$ is given by
\[
\ar_j(k) := \begin{cases}
\left( \int_{F_{k - 1}} |\X_j f|  d\m_{d + 2n + 1}\right)^{\frac{1}{d + 2n + 1}}, & \mbox{if $1 \le j \le d$} \\
 \left( \int_{F_{k - 1}} |\X_j f|  d\m_{d + 2n + 1}\right)^{\frac{n + 1}{n(d + 2n + 1)}}, &  \mbox{if $d + 1 \le j \le d + 2n$}
\end{cases}.
\]
By H\"older's inequality,
\begin{align}\nonumber
\sum_k \prod_{j = 1}^{d + 2n} \ar_j(k) &\le \prod_{j = 1}^d \left( \sum_k \ar_j(k)^{d + 2n + 1} \right)^{\frac{1}{d + 2n + 1}} \prod_{j = d + 1}^{d + 2n }\left( \sum_k \ar_j(k)^{\frac{2n(d + 2n + 1)}{2n + 1}} \right)^{\frac{2n + 1}{2n(d + 2n + 1)}} \\
\label{inter2}
&\le \prod_{j = 1}^d \left( \sum_k \ar_j(k)^{d + 2n + 1} \right)^{\frac{1}{d + 2n + 1}} \prod_{j = d + 1}^{d + 2n }\left( \sum_k \ar_j(k)^{\frac{n(d + 2n + 1)}{n + 1}} \right)^{\frac{n + 1}{n(d + 2n + 1)}},
\end{align}
where the last ``$\le$'' follows from the embedding  $\ell^{\frac{n(d + 2n + 1)}{n + 1}}(\Z)$ into $\ell^{\frac{2n(d + 2n + 1)}{2n + 1}}(\Z)$ since $\frac{2n}{2n + 1} > \frac{n}{n + 1}$. Then combining \eqref{inter3} with \eqref{inter2} we obtain
\[
\int_{\H(d,\aa)} |f|^{\frac{d + 2n + 2}{d + 2n + 1}}  d\m_{d + 2n + 1} \le \CC(d,\aa)  2^{\frac{2 (d + 2n + 2)}{d + 2n + 1}} \prod_{j = 1}^d \|\X_j f\|_1^{\frac{1}{d + 2n + 1}} \prod_{j = d + 1}^{d + 2n }  \|\X_j f\|_1^{\frac{n + 1}{n(d + 2n + 1)}},
\]
which implies
\begin{align*}
\|f\|_{\frac{d + 2n + 2}{d + 2n + 1}} \le C \prod_{j = 1}^d \|\X_j f\|_1^{\frac{1}{d + 2n + 2}} \prod_{j = d + 1}^{d + 2n }  \|\X_j f\|_1^{\frac{n + 1}{n(d + 2n + 2)}} \le C \|\nabla f\|_1 %\\ \le& \, C' \left[ \sum_{j = 1}^d  \frac{1}{d + 2n + 2} \|\X_j f\|_1 +  \prod_{j = d + 1}^{d + 2n }  \frac{n + 1}{n(d + 2n + 2)} \|\X_j f\|_1 \right],
\end{align*}
where $C$ is a constant only depending on the underlying group and  the last ``$\le$'' comes from the simple fact that $|\X_j f| \le |\nabla f|$, $1 \le j \le d + 2n$. This proves \eqref{tarxx} and thus this theorem.
\end{proof}

Applying Theorem \ref{tgns} to the set we obtain the corresponding isoperimetric inequality.

\begin{corollary}\label{ciso}
On corank $1$ Carnot group $\H(d,\aa)$, there exists a constant $C > 0$ (depending on $\H(d,\aa)$) such that
\[
\m_{d + 2n + 1}(E)^{\frac{d + 2n + 1}{d + 2n + 2}} \le C \PP_{\H(d,\aa)}(E), \qquad \forall \, E \mbox{ with finite perimeter}.
\]
\end{corollary}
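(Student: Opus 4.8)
The plan is to apply the Gagliardo--Nirenberg--Sobolev inequality of Theorem \ref{tgns} directly to the characteristic function $f = \chi_E$. The two facts that make this essentially immediate are the defining identity $\PP_{\H(d,\aa)}(E) = \Var_{\H(d,\aa)}(\chi_E)$ and the elementary observation that $\chi_E^p = \chi_E$ for every exponent $p > 0$, so that the $L^p$ norm of $\chi_E$ collapses to a single power of the measure of $E$.

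Concretely, suppose first that $E$ has finite measure. Then $\chi_E \in L^1(\H(d,\aa))$ and, since $E$ has finite perimeter, $\Var_{\H(d,\aa)}(\chi_E) = \PP_{\H(d,\aa)}(E) < +\infty$; hence $\chi_E \in \BV(\H(d,\aa))$ and Theorem \ref{tgns} applies to it. Using $\chi_E^{\frac{d + 2n + 2}{d + 2n + 1}} = \chi_E$, I would compute
\[
\|\chi_E\|_{\frac{d + 2n + 2}{d + 2n + 1}} = \left( \int_{\H(d,\aa)} \chi_E \, d\m_{d + 2n + 1} \right)^{\frac{d + 2n + 1}{d + 2n + 2}} = \m_{d + 2n + 1}(E)^{\frac{d + 2n + 1}{d + 2n + 2}},
\]
so that substituting $f = \chi_E$ into \eqref{tarxx} and invoking the definition of the perimeter yields precisely
\[
\m_{d + 2n + 1}(E)^{\frac{d + 2n + 1}{d + 2n + 2}} \le C \, \PP_{\H(d,\aa)}(E).
\]

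The only point requiring care, and hence the main (though mild) obstacle, is the finite-measure hypothesis used above: a priori a set of finite perimeter need not have finite measure, and $\chi_E \in \BV(\H(d,\aa))$ was needed to invoke Theorem \ref{tgns}. For a general set $E$ of finite perimeter I would argue by truncation, applying the finite-measure case to $E \cap B_R$ for an exhausting family of balls $B_R$ and letting $R \to +\infty$. The left-hand side then increases to $\m_{d + 2n + 1}(E)^{\frac{d + 2n + 1}{d + 2n + 2}}$ by monotone convergence, while the delicate bookkeeping is to control $\PP_{\H(d,\aa)}(E \cap B_R)$ in terms of $\PP_{\H(d,\aa)}(E)$: by the coarea formula one selects a sequence of radii along which the perimeter contributed by the slicing set $E \cap \partial B_R$ stays controlled, so that $\limsup_R \PP_{\H(d,\aa)}(E \cap B_R) \le \PP_{\H(d,\aa)}(E)$. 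In the substantive case where $E$ has finite measure this subtlety does not arise, and the one-line computation above already gives the isoperimetric inequality.
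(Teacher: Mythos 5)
Your core computation is exactly the paper's proof of Corollary \ref{ciso}: substitute $f = \chi_E$ into Theorem \ref{tgns}, use $\chi_E^{p} = \chi_E$ to see that $\|\chi_E\|_{\frac{d+2n+2}{d+2n+1}} = \m_{d+2n+1}(E)^{\frac{d+2n+1}{d+2n+2}}$, and conclude via $\PP_{\H(d,\aa)}(E) = \Var_{\H(d,\aa)}(\chi_E)$.

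The second paragraph of your proposal, however, is both unnecessary and, as an argument, unsound. It is unnecessary because of the paper's conventions: $\Var_{\H(d,\aa)}$ is defined only for $f \in L^1(\H(d,\aa))$, and the perimeter is defined by $\PP_{\H(d,\aa)}(E) := \Var_{\H(d,\aa)}(\chi_E)$, so ``$E$ with finite perimeter'' already presupposes $\chi_E \in L^1(\H(d,\aa))$, i.e.\ $\m_{d+2n+1}(E) < +\infty$, and hence $\chi_E \in \BV(\H(d,\aa))$; Theorem \ref{tgns} applies directly. It is unsound because the extension you aim at is false: for a set of infinite measure whose perimeter is finite in the local sense (take $E$ to be the complement of a ball, whose essential boundary is that of the ball), the left-hand side is infinite while the right-hand side is finite, so no limiting argument can succeed. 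Concretely, your claimed selection of radii with $\limsup_{R} \PP_{\H(d,\aa)}(E \cap B_R) \le \PP_{\H(d,\aa)}(E)$ fails in this example, since the slice terms coming from $E \cap \partial B_R$ have surface measure tending to infinity, and such ``good'' radii do not exist; the coarea selection trick works precisely when $\m_{d+2n+1}(E) < +\infty$, which is the case already covered by your first paragraph. So the one-line substitution is the complete proof, and the truncation appendix should be deleted.
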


Note that the isoperimetric inequality on the first Heisenberg was first established by Pansu \cite{P82}. Generalizations can be found in \cite{CDG94, GN96}.

\subsection{Generalization to the product spaces}

Similar argument can be applied to the case of product of corank $1$ Carnot groups. In fact, from Proposition \ref{ppro} and Theorem \ref{t1}, we obtain the following theorem without difficulties.

\begin{theorem}\label{tpro}
Assume $\H(d,\aa)$ and $\H(d',\aa')$ are two corank $1$ Carnot groups with $\aa = (\a_1, \ldots, \a_n)$ and $\aa' = (\a_1',\ldots, \a_{n'}')$. On the product space $\H(d,\aa) \times \H(d',\aa')$, using the notations in Proposition \ref{ppro}, there are $d + 2n + d' + 2n'$ projections $\{\bar{\pi}_j\}_{j = 1}^{d + 2n + d'+ 2n'}$ defined in a similar way as \eqref{defbp}. Then it holds that
\begin{align*}
\int_{\H(d,\aa) \times \H(d',\aa')} & \prod_{j = 1}^{d + 2n + d' + 2n'} f_j(\bar{\pi}_j(x,t,x',t')) dxdtdx'dt' \\
\le  \CC(d,\aa,d',\aa') & \prod_{j = 1}^d \|f_j\|_{d + 2n + d' + 2n' + 3} \prod_{j = d + 1}^{d + 2n} \|f_j\|_{\frac{n(d + 2n + d' + 2n' + 3)}{n + 1}}\\
\times & \,  \prod_{j = d + 2n + 1}^{d + 2n + d'} \|f_j\|_{d + 2n + d' + 2n' + 3} \prod_{j = d + 2n + d' + 1}^{d + 2n + d' + 2n'} \|f_j\|_{\frac{n'(d + 2n + d' + 2n' + 3)}{n' + 1}},
\end{align*}
for all non-negative measurable functions $f_1, \ldots, f_{d + 2n + d' + 2n'}$ on $\R^{d + 2n + d' + 2n + 1}$, where 
\begin{align}\label{defCCC}
\CC(d,\aa,d',\aa') := \frac{\|\RR\|_{\frac{3}{2} \to 3}^{\frac{6}{d + 2n + d' + 2n' + 3}}}{\left( \prod_{j = 1}^n \a_j\right)^{\frac{1}{n(d + 2n + d' + 2n'+ 3)}} \left( \prod_{j = 1}^{n'} \a_j'\right)^{\frac{1}{n'(d + 2n + d' + 2n'+ 3)}}}.
\end{align}
\end{theorem}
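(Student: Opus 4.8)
The plan is to obtain Theorem \ref{tpro} as an immediate application of Proposition \ref{ppro}, where both of its hypotheses \eqref{gbl4} and \eqref{gbl3} are supplied by Theorem \ref{t1}. Concretely, I would set $(\Omega,\S,\mu) = \H(d,\aa)$ with projections $\{p_j\}_{j=1}^{d+2n} = \{\pi_j\}_{j=1}^{d+2n}$, and $(\Omega',\S',\mu') = \H(d',\aa')$ with the analogous projections $\{\pi_\ell'\}_{\ell=1}^{d'+2n'}$ on the second factor. Both factors are corank $1$ Carnot groups, so the structural hypothesis of Proposition \ref{ppro} is met, and the resulting $\{\bar p_j\}$ are exactly the projections $\{\bar\pi_j\}$ described in the statement. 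Since Proposition \ref{ppro} already performs the entropy-subadditivity argument on the product space, everything left is the identification of parameters and the bookkeeping of constants.

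First I would read the exponents off Theorem \ref{t1}. On $\H(d,\aa)$ this gives $e^D = \CC(d,\aa)$ together with $c_j = \frac{1}{d+2n+1}$ for $1 \le j \le d$ and $c_j = \frac{n+1}{n(d+2n+1)}$ for $d+1 \le j \le d+2n$; an identical reading on $\H(d',\aa')$ yields $e^{D'} = \CC(d',\aa')$ and the analogous $c_\ell'$. As a sanity check, $\sum_j c_j = \frac{d+2n+2}{d+2n+1}$, which is precisely $\frac{Q}{Q-1}$ for the homogeneous dimension $Q = d+2n+2$, as demanded by dilation invariance in Remark \ref{coeffr}.

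Next I would invoke the simplified constants of Remark \ref{coeffr}. With $Q = d+2n+2$ and $Q' = d'+2n'+2$ one gets $Q+Q'-1 = d+2n+d'+2n'+3$, so the new exponents become $1/\bar c_j = d+2n+d'+2n'+3$ on the $d$ (resp. $d'$) abelian directions and $1/\bar c_j = \frac{n(d+2n+d'+2n'+3)}{n+1}$ (resp. with $n'$) on the $2n$ (resp. $2n'$) non-abelian directions. These are exactly the $L^p$ exponents claimed in the statement, so no further work is needed on that front.

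The only computation requiring care is checking that $\bar D = \frac{D(Q-1)+D'(Q'-1)}{Q+Q'-1}$ exponentiates to $\CC(d,\aa,d',\aa')$ of \eqref{defCCC}. Here the factor $Q-1 = d+2n+1$ cancels the denominators hidden inside $\ln\CC(d,\aa)$ from \eqref{defCC}, leaving $(Q-1)\ln\CC(d,\aa) = 3\ln\|\RR\|_{\frac{3}{2}\to3} - \frac{1}{n}\sum_{j=1}^n \ln\a_j$; adding its primed counterpart and dividing by $Q+Q'-1$ produces the exponent $\frac{6}{d+2n+d'+2n'+3}$ on $\|\RR\|_{\frac{3}{2}\to3}$ and the two $\alpha$-products of \eqref{defCCC}. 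This cancellation is the crux, though it is routine algebra rather than a genuine obstacle; indeed the whole difficulty of the product construction has already been absorbed into Proposition \ref{ppro}.
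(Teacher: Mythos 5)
Your proposal is correct and is exactly the paper's own route: the paper derives Theorem \ref{tpro} by applying Proposition \ref{ppro} with both factors taken to be corank $1$ Carnot groups, with the hypotheses \eqref{gbl4} and \eqref{gbl3} supplied by Theorem \ref{t1} and the constants simplified via Remark \ref{coeffr}. Your parameter bookkeeping (the exponents $1/\bar c_j$ and the verification that $e^{\bar D}$ equals \eqref{defCCC}) is accurate and in fact more explicit than what the paper records.
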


Generalizations of Theorem \ref{tgns} and Corollary \ref{ciso}, as well as corresponding results for products of three corank $1$ Carnot groups or more are left to the interested reader.

\section*{Acknowledgements}
\setcounter{equation}{0}
YZ would like to thank Prof. Neal Bez for fruitful discussions and bringing \cite{CC09, S11, TW03} to the author's attention %informing the author of the references \cite{CC09, TW03, S11} 
during the MATRIX-RIMS Tandem Workshop: Geometric Analysis in Harmonic Analysis and PDE. This work was supported by the Research Institute for Mathematical Sciences, an International Joint Usage/Research Center located in Kyoto University.

%\nocite{*}
%\bibliographystyle{abbrv}
%\bibliography{LWbib}

\mbox{}\\
Ye Zhang\\
Analysis on Metric Spaces Unit  \\
Okinawa Institute of Science and Technology Graduate University \\
1919-1 Tancha, Onna-son, Kunigami-gun \\
Okinawa, 904-0495, Japan \\
E-Mail: zhangye0217@gmail.com \quad or \quad Ye.Zhang2@oist.jp \mbox{}\\

\end{document}